\begin{document}
\title[A pair correlation problem and counting lattice points]{A pair correlation problem, and counting lattice points with the zeta function}
\author[C.\ Aistleitner, D.\ El-Baz, M.\ Munsch]{Christoph Aistleitner, Daniel El-Baz, Marc Munsch}

\subjclass[2010]{Primary 11K06, 11J83, 11M06; Secondary 11B05, 11J25, 11J71 }
 \keywords{Pair correlation, Riemann zeta function, lattice points, Diophantine inequality.} 

\newcommand{\mods}[1]{\,(\mathrm{mod}\,{#1})}

\dedicatory{Dedicated to the memory of Professor Aleksandar Ivi\'c (1949-2020)}

\begin{abstract}
The pair correlation is a localized statistic for sequences in the unit interval. Pseudo-random behavior with respect to this statistic is called Poissonian behavior. The metric theory of pair correlations of sequences of the form $(a_n \alpha)_{n \geq 1}$ has been pioneered by Rudnick, Sarnak and Zaharescu. Here $\alpha$ is a real parameter, and $(a_n)_{n \geq 1}$ is an integer sequence, often of arithmetic origin. Recently, a general framework was developed which gives criteria for Poissonian pair correlation of such sequences for almost every real number $\alpha$, in terms of the additive energy of the integer sequence $(a_n)_{n \geq 1}$. In the present paper we develop a similar framework for the case when $(a_n)_{n \geq 1}$ is a sequence of reals rather than integers, thereby pursuing a line of research which was recently initiated by Rudnick and Technau. As an application of our method, we prove that for every real number $\theta>1$, the sequence $(n^\theta \alpha)_{n \geq 1}$ has Poissonian pair correlation for almost all $\alpha \in \mathbb{R}$.
\end{abstract}

\maketitle

\newtheorem{cor}{Corollary}
\newtheorem{prop}{Proposition}
\newtheorem{claim}{Claim}
\newtheorem{lemma}{Lemma}
\newtheorem{thm}{Theorem}
\newtheorem{defn}{Definition}
\newtheorem{conj}{Conjecture}
\newcommand{\tmop}[1]{\ensuremath{\operatorname{#1}}}
\theoremstyle{definition}
\newtheorem{exmp}{Example}

\theoremstyle{remark}
\newtheorem{rmk}{Remark}

\newcommand{\ve}{\varepsilon}
\newcommand{\R}{\mathbb{R}}
\newcommand{\dmu}{~d\mu(\alpha)}

\allowdisplaybreaks
\parindent0mm

\section{Introduction and statement of results}

A sequence $(y_n)_{n \geq 1}$ of real numbers is called uniformly distributed  (or equidistributed) modulo one if for all intervals $A \subset [0,1)$ the asymptotic equality 
\begin{equation} \label{ud}
\lim_{N \to \infty} \frac{1}{N} \sum_{n=1}^N \mathbf{1}_A (y_n) = \lambda(A) 
\end{equation}
holds. Here $\mathbf{1}_A$ is the indicator function of $A$, extended periodically with period 1, and $\lambda$ denotes Lebesgue measure. Uniform distribution theory has a long history, going back to the seminal paper of Hermann Weyl \cite{weyl}. For general background, see \cite{dts,kn}. Uniform distribution of a sequence can be seen as a pseudo-randomness property, in the sense that a sequence $(Y_n)_{n \geq 1}$ of independent, identically distributed random variables having uniform distribution on $[0,1)$ satisfies \eqref{ud} almost surely as a consequence of the Glivenko--Cantelli theorem; thus a deterministic sequence $(y_n)_{n \geq 1}$ which is uniformly distributed mod 1 exhibits the same behavior as a typical realization of a random sequence.\\

A sequence $(y_n)_{n \geq 1}$ is said to have Poissonian pair correlation if for all real numbers $s \geq 0$,
\[
\lim_{N \to \infty} \frac{1}{N} \sum_{\substack{1 \leq m,n \leq N,\\m \neq n}} \mathbf{1}_{[-s/N,s/N]} (y_n - y_m) = 2 s.
\]
This notion is motivated by questions from theoretical physics, and plays a key role in the Berry--Tabor conjecture; see \cite{marklof} for more information. Just like equidistribution, Poissonian pair correlation can be seen as a pseudo-randomness property, since a random sequence $(Y_n)_{n \geq 1}$ as above almost surely has Poissonian pair correlation. However, clearly the two properties are of a rather different nature. While equidistribution is a ``large-scale'' statistic (where the test interval always remains the same), the pair correlation is a highly localized statistic (where the size of the test interval shrinks in proportion with $N$). Note that the two properties are not independent: it is known that a sequence having Poissonian pair correlation necessarily must be equidistributed \cite{alp,gl,mark}, whereas the opposite implication is generally false. An illustrative example is the sequence $(n \alpha)_{n \geq 1}$, which is equidistributed if and only if $\alpha \not\in \mathbb{Q}$, but which fails to have Poissonian pair correlation for any $\alpha$ (see \cite{LarcherStockinger} for a more general result along those lines). \\

The theory of uniform distribution modulo one can be said to be relatively well understood (at least in the one-dimensional case). Many specific sequences are known which are uniformly distributed mod one. In contrast, only very few specific results are known in the pair correlation setting. A notable exception is the sequence $(\sqrt{n})_{n \in \mathbb{Z}_{\geq 1} \setminus \Box}$, which is known to have Poissonian pair correlation \cite{emv}. The sequence $(n^2 \alpha)_{n \geq 1}$ is conjectured to have Poissonian pair correlation under mild Diophantine assumptions on $\alpha$, but only partial results are known in this direction \cite{hbpc,my, rsz,tru}. Lacking specific examples, it is natural to turn to a metric theory instead. Let $(a_n)_{n \geq 1}$ be a sequence of distinct integers, let $\alpha \in \mathbb{R}$, and consider sequences of the form $(a_n \alpha)_{n \geq 1}$. The metric theory of such sequences with respect to equidistribution is very simple: for every such $(a_n)_n$, the sequence $(a_n \alpha)_n$ is uniformly distributed mod 1 for almost all $\alpha$ \cite{weyl}. The situation with respect to pair correlation is much more delicate. Pioneering work in this area was carried out by Rudnick, Sarnak and Zaharescu \cite{rsarnak,rz}. As noted above, $(n \alpha)_{n \geq 1}$ does not have Poissonian pair correlation for any $\alpha$. However, for any polynomial $p \in \mathbb{Z}[X]$ of degree at least 2, the pair correlation of $(p(n)\alpha)_n$ is Poissonian for almost all $\alpha$. For related results, see for example \cite{bpt,clz,rz_lac}.\\

Recently, a simple criterion was established in \cite{all} which allows to decide whether the sequence $(a_n \alpha)_n$ has Poissonian pair correlation for almost all $\alpha$ for many naturally arising integer sequences $(a_n)_n$ . Let $E_N$ denote the number of solutions $(n_1,n_2,n_3,n_4)$ of the equation
\begin{equation} \label{dio}
a_{n_1} - a_{n_2} + a_{n_3} - a_{n_4} = 0,
\end{equation}
subject to $1 \leq n_1,n_2,n_3,n_4 \leq N$. This quantity is called the \emph{additive energy} in the additive combinatorics literature (see \cite{gry,TaoVu}). Note that $N^2 \leq E_N \leq N^3$ for every $(a_n)_n$ and every $N$. The criterion is as follows. If a sequence $(a_n)_n$ satisfies $E_N \ll N^{3-\ve}$ for some $\ve>0$, then $(a_n \alpha)_n$ has Poissonian pair correlation for almost all $\alpha$. If in contrast $E_N \gg N^3$, then the conclusion fails to be true. For further refinements of this criterion, and for remaining open problems, see \cite{alt,bcgw,bw,ls}. We emphasize that all that was written in this paragraph requires $(a_n)_n$ to be a sequence of \emph{integers}.\\

Very little is known in the metric theory of pair correlation of sequences $(x_n \alpha)_n$ when $(x_n)_n$ is a sequence of reals rather than integers. One step in this general direction is \cite{clz}, where $(x_n)_n$ is allowed to take rational values and the results obtained depend on the size of the denominators of these rationals. A general result was obtained recently in \cite{rt}, where the authors gave a criterion formulated in terms of the number of solutions of a certain Diophantine inequality. The criterion is as follows: for a sequence $(x_n)_n$, assume that there exist some $\ve>0$ and $\delta>0$ such that the number of integer solutions $(n_1,n_2,n_3,n_4,j_1,j_2)$ of the equation
\begin{equation} \label{dioph_in}
\left| j_1 (x_{n_1} - x_{n_2}) - j_2  ( x_{n_3} - x_{n_4} ) \right| < N^\ve, 
\end{equation}
subject to $1 \leq |j_1|,|j_2| \leq N^{1+\ve},~1 \leq n_1,n_2,n_3,n_4 \leq N,~n_1 \neq n_2, ~n_3 \neq n_4$, is of order $\ll N^{4 - \delta}$, then $(x_n \alpha)_n$ has Poissonian pair correlation for almost all $\alpha$. It is verified in \cite{rt} that this condition is satisfied for lacunary sequences. A condition in the spirit of \eqref{dioph_in} arises very naturally when studying this sort of problem (cf.\ also \cite{rz}); we will encounter a variant of this condition in Equation \eqref{line_1} below. In particular, it is very natural that in the integer case one has to count solutions of Diophantine \emph{equations}, while in the real-number setting one has to count solutions of Diophantine \emph{inequalities}. The problem with \eqref{dioph_in} is that it is in general rather difficult to verify whether this condition is satisfied for a given sequence or not, with issues being caused in particular by the presence of the coefficients $j_1$ and $j_2$. The purpose of the present paper is to give a simplified criterion, in the spirit of the criterion of \cite{all} which was specified in terms of the number of solutions of the equation \eqref{dio}.\\

\begin{thm} \label{th1}
Let $(x_n)_{n \geq 1}$ be a sequence of positive real numbers for which there exists a constant $c>0$ such that $x_{n+1} - x_n \geq c,~n \geq 1$. Let $E_N^*$ denote the number of solutions $(n_1, n_2, n_3, n_4)$ of the inequality
\begin{equation} \label{dioph_in_1}
|x_{n_1} - x_{n_2} + x_{n_3} - x_{n_4}| <1,
\end{equation}
subject to $n_i \leq N, ~i = 1,2,3,4$. Assume that there exists some $\delta>0$ such that $E_N^* \ll N^{183/76-\delta}$ as $N \to \infty$. Then the sequence $(x_n \alpha)_{n \geq 1}$ has Poissonian pair correlation for almost all $\alpha \in \mathbb{R}$. \\
\end{thm}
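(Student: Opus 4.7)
The plan is to bound the variance of a smoothed version of the pair correlation counting function, invoke Chebyshev and Borel--Cantelli along a sparse subsequence, and interpolate. By a standard localization argument, it suffices to prove Poissonian pair correlation $\mu$-a.e.\ for every smooth, compactly supported probability measure $\mu$ on $\R$. For a parameter $s \ge 0$, take $C^\infty$ majorants and minorants $f^\pm$ of $\mathbf{1}_{[-s/N,s/N]}$ on $[-1/2,1/2)$ with matching integrals up to $O(1/N^2)$, and let $F^\pm$ denote their $1$-periodizations. Writing
$$
R_N^\pm(\alpha) \;=\; \frac{1}{N} \sum_{1 \le m \ne n \le N} F^\pm\!\bigl(\alpha(x_m-x_n)\bigr),
$$
Fourier-expanding $F^\pm$ separates the main term, which equals $2s + O(1/N)$, from a fluctuating part $N^{-1} \sum_{k \ne 0} \widehat{f^\pm}(k) \sum_{m \ne n} e\bigl(k\alpha(x_m-x_n)\bigr)$.

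Squaring this fluctuation and integrating against $\mu$ produces, after expanding the exponentials,
$$
\int_{\R} \bigl( R_N^\pm(\alpha) - 2s \bigr)^2 \, d\mu(\alpha) \;\ll\; \frac{1}{N^2} \sum_{k_1, k_2 \ne 0} \bigl|\widehat{f^\pm}(k_1)\bigr|\, \bigl|\widehat{f^\pm}(k_2)\bigr| \sum_{\substack{m_i \ne n_i \\ n_i, m_i \le N}} \bigl|\widehat{\mu}(k_1 y_1 - k_2 y_2)\bigr|,
$$
with $y_i = x_{m_i}-x_{n_i}$. Because $\widehat{\mu}$ has rapid decay, only tuples satisfying $|k_1 y_1 - k_2 y_2| \le 1$ contribute appreciably, and the $k$-sums can be truncated at $|k_j| \le N^{1+\varepsilon}$ at a negligible cost. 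Combining this with $|\widehat{f^\pm}(k)| \ll \min(s/N, 1/|k|)$ and a dyadic partition $|k_j| \asymp K_j$ reduces the task to bounding
$$
T(K_1,K_2) \;=\; \#\bigl\{(k_1,k_2,m_1,n_1,m_2,n_2) : |k_j| \asymp K_j,\ m_i \ne n_i \le N,\ |k_1 y_1 - k_2 y_2| \le 1\bigr\}.
$$

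The crux of the argument is to bound $T(K_1,K_2)$ in terms of the unweighted quantity $E_N^*$. For fixed differences $y_1, y_2$ one needs to count lattice points $(k_1,k_2)$ in the box $|k_j| \asymp K_j$ lying close to the hyperbola $\{k_1 y_1 = k_2 y_2\}$, a classical lattice-point problem sharply controlled via zeta-function mean-value / moment estimates (which is where the paper's eponymous zeta machinery enters). Averaging the resulting bound over admissible $(y_1, y_2)$ --- which, thanks to the uniform gap $x_{n+1}-x_n \ge c$, are counted cleanly by $E_N^*$ after grouping differences into unit windows --- and interpolating with the trivial count yields an inequality of the form $T(K_1,K_2) \ll (K_1 K_2)^{1-\beta} E_N^*\, N^{\varepsilon}$ for a definite $\beta > 0$. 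Balancing this against the Fourier weights and the hypothesis $E_N^* \ll N^{183/76 - \delta}$ is what dictates the precise numerical threshold, and produces a variance bound $\int (R_N^\pm - 2s)^2\, d\mu(\alpha) \ll N^{-\eta}$ for some $\eta = \eta(\delta) > 0$.

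Once the variance is summable along a sufficiently sparse subsequence $(N_j)$, Chebyshev and Borel--Cantelli give the Poissonian property along $(N_j)$ for $\mu$-a.e.\ $\alpha$, and the gap condition allows a routine sandwich between consecutive $N_j$ to upgrade this to all $N$. The principal obstacle is the lattice-point estimate controlling $T(K_1,K_2)$: one must extract \emph{just} enough saving from the proximity-to-hyperbola constraint $|k_1 y_1 - k_2 y_2| \le 1$ to match the exponent $183/76$, and any weakening of the underlying zeta-function input would force a correspondingly stronger hypothesis on the additive-energy-type quantity $E_N^*$.
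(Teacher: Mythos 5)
Your overall architecture --- Selberg-type smoothing, expanding into Fourier series, integrating against a measure with controlled Fourier transform, reducing the variance to the count $T(K_1,K_2)$ of solutions of $|k_1(x_{m_1}-x_{n_1})-k_2(x_{m_2}-x_{n_2})|\le 1$ in dyadic boxes, and finishing with Chebyshev, Borel--Cantelli and a sandwich along a sparse subsequence --- matches the paper. But the crux, which you correctly identify as the bound on $T(K_1,K_2)$, is where your proposal has a genuine gap. You propose to fix the pair of differences $(y_1,y_2)$, count lattice points $(k_1,k_2)$ near the line $k_1y_1=k_2y_2$ by ``classical'' zeta-moment lattice-point estimates, and then average over the $\ll E_N^*$ admissible pairs. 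No such pointwise estimate exists: for $y_1=y_2$ (and there are $\gg N^2$ such diagonal pairs) the count in the box $|k_j|\asymp K$ is $\asymp K$, and for general pairs it depends on the Diophantine nature of $y_1/y_2$, so no uniform saving in $(K_1K_2)$ is available pair by pair. Moreover the shape of bound you posit, $T(K_1,K_2)\ll (K_1K_2)^{1-\beta}E_N^*N^{\varepsilon}$, is linear in $E_N^*$; the bound the paper actually proves in the main regime is $\ll N^{4-\varepsilon/4}+N^{173/89+2\varepsilon}(E_N^*)^{76/89}$, sublinear in $E_N^*$ with a compensating power of $N$, and it is precisely this exponent $76/89$ that produces the threshold $183/76$. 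A linear-in-$E_N^*$ bound could not explain that numerology.

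The mechanism you are missing is global rather than pointwise. One bins the multiset of differences $z_m=|x_i-x_j|$ first into unit windows ($b_k$) and then into multiplicative windows $I_h=[\lceil(1+1/T)^h\rceil,\lceil(1+1/T)^{h+1}\rceil)$ ($a_h=(\sum_{k\in I_h}b_k^2)^{1/2}$), forms the generalized Dirichlet polynomial $P(t)=\sum_h a_h(1+1/T)^{iht}$, and shows that the entire count (all $k_1,k_2,y_1,y_2$ at once) is $\ll \frac{2^u}{T}\int_{\mathbb{R}} G(t)|P(t)|^2\Phi(t/T)\,dt$, where $G(t)$ is a smoothed version of $|\zeta(1/2+it)|^2$ obtained from a convolution formula (this requires non-negativity of all the Fourier kernels involved, not just rapid decay). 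The two inputs are $\int|P|^2\Phi(t/T)\,dt\ll TE_N^*$ (the additive energy enters here, as an $L^2$ bound on $P$, not as a count of pairs to average over) and $|P(0)|\ll N^2$; H\"older with exponents $(A,A,A/(A-2))$ and Ivi\'c's bound on the $A=178/13$ moment of $\zeta(1/2+it)$ then yields the stated exponents. Separately, the regime where one of the differences is smaller than $N^{1.01}$ must be treated on its own with a different choice of the truncation parameter $T$ (this is where the gap hypothesis $x_{n+1}-x_n\ge c$ does real work, by limiting the number of small differences); your sketch does not address this regime at all. Without these ingredients the claimed bound on $T(K_1,K_2)$ is an assertion, not a proof.
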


The exponent $183/76 \approx 2.408$ in the conclusion of the theorem comes from a bound for the $178/13$-th moment of the Riemann zeta function on the critical line due to Ivic \cite{Ivic}, building on earlier work of Heath-Brown \cite{HB}. Conditionally under the Lindel\"of hypothesis, our bound for $E_N^*$ can be relaxed to $E_N^* \ll N^{3 - \ve}$ for any $\ve>0$, which would be in accordance with the results known for the integer case.\\

Theorem \ref{th1} applies, for example, to all sequences of the form $x_n = p(n),~n \geq 1$, where $p$ is a quadratic polynomial with real coefficients. For such a sequence $(x_n)_n$ we have $E_N^* \ll N^{2 + \ve}$ for any $\ve>0$ by Lemma 5.2 of \cite{bkw}. Theorem \ref{th1} also applies to $x_n = p(n)$ for every polynomial $p \in \mathbb{R}[X]$ of degree $d \geq 3$, under the additional assumption that the coefficient of $x^{d-1}$ is rational\footnote{This could be relaxed to assuming some Diophantine condition on this coefficient.}; the required bound for $E_N^*$ then follows, after eliminating this coefficient, from Lemma \ref{rs_lemma} below (with the choice of $\theta = d$ and $\gamma = N^{d-2}$). The extra assumption on the second coefficient is most likely redundant, but we have not been able to establish the necessary bound for $E_N^*$ without it. A famous open conjecture in additive combinatorics asserts that $E_N \ll N^{2 + \ve}$ for all convex sequences $(x_n)_n$, which would provide many further applications of our theorem; however, unfortunately the best current bound in this direction (Shkredov's $32/13 \approx 2.46$ from \cite{shkr}) is just beyond the range of applicability of our theorem.\\

Bounding the number of solutions of \eqref{dioph_in_1} is necessary to control the variance of the pair correlation function. When carefully reading the proof of Theorem \ref{th1} it becomes visible that not all solutions of \eqref{dioph_in_1} contribute equally to the variance, but that rather a 4-tuple $(n_1,n_2,n_3,n_4)$ with $x_{n_1} - x_{n_2} + x_{n_3} - x_{n_4} = \gamma$ for some $\gamma \in (-1,1)$ has a stronger effect on the variance the smaller the absolute value of $\gamma$ is. This suggests to consider the quantity $E_{N,\gamma}^*$, which is defined as the number of solutions $(n_1, n_2, n_3, n_4)$ of the inequality
$$
|x_{n_1} - x_{n_2} + x_{n_3} - x_{n_4}| < \gamma,
$$
for $\gamma \in (0,1]$ and subject to $n_i \leq N, ~i = 1,2,3,4$. Very informally speaking, one might expect that $E_{N,\gamma}^*$ scales as $E_{N,\gamma}^* \approx \gamma E_N^*$ for a ``randomly behaved'' real sequence $(x_n)_n$, except for the contribution of the trivial solutions $n_1=n_2$ and $n_3 =n_4$ which always is of order $N^2$. The following theorem states that being able to control $E_{N,\gamma}^*$ as a function of $\gamma$ indeed allows us to deduce metric pair correlations in some cases where the condition on the additive energy in Theorem \ref{th1} fails to hold.\\

\begin{thm} \label{th2_new}
Let $(x_n)_{n \geq 1}$ be a sequence of positive real numbers for which there exists $c>0$ such that $x_{n+1} - x_n \geq c,~n \geq 1$. Assume that there exists some $\delta>0$ such that for all $\eta>0$ we have
\begin{equation} \label{engamma_ass}
E_{N,\gamma}^* \ll_{\eta,\delta} N^{2 + \eta} + \gamma N^{3-\delta}
\end{equation}
as $N \to \infty$, uniformly for $\gamma \in (0,1]$. Then  the sequence $(x_n \alpha)_{n \geq 1}$ has Poissonian pair correlation for almost all $\alpha \in \mathbb{R}$. \\
\end{thm}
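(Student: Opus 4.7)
The plan is to follow the variance-estimation framework used in the proof of Theorem~\ref{th1}: fix a smooth compactly supported measure $\mu$ on $\mathbb{R}$, estimate $\int |F_N(s,\alpha) - 2s|^2 \, d\mu(\alpha)$, and apply Borel--Cantelli along a sufficiently dense subsequence $(N_\ell)$. A standard monotonicity-in-$s$ argument then extends almost-sure convergence along $N_\ell$ to all $N$ and to a countable dense set of $s$, after which letting the support of $\mu$ exhaust $\mathbb{R}$ finishes the proof.

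For the variance computation, approximate the periodic indicator $\mathbf{1}_{[-s/N, s/N]}$ by smooth compactly supported majorants/minorants $\psi_\pm$ whose Fourier coefficients $\hat\psi_\pm(k)$ satisfy $|\hat\psi_\pm(k)| \ll 1/N$ and decay faster than any polynomial once $|k|$ exceeds $N^{1+\eta}$. After expanding in Fourier series and integrating against $\mu$, the variance becomes, up to negligible error,
\begin{equation*}
\frac{1}{N^2}\sum_{0 < |k_1|,|k_2| \leq N^{1+\eta}} \hat\psi(k_1)\,\overline{\hat\psi(k_2)} \sum_{\substack{n_1 \neq n_2 \\ n_3 \neq n_4}} \hat\mu\bigl(k_1(x_{n_1} - x_{n_2}) - k_2(x_{n_3} - x_{n_4})\bigr).
\end{equation*}
Smoothness of $\mu$ gives $|\hat\mu(t)| \ll_A (1+|t|)^{-A}$ for all $A>0$, so the inner sum is effectively restricted to 4-tuples with $|k_1(x_{n_1} - x_{n_2}) - k_2(x_{n_3} - x_{n_4})| \ll N^\eta$.

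To bring in the hypothesis, decompose dyadically by $|k_1|,|k_2| \sim K$ and by the size $|k_1(x_{n_1} - x_{n_2}) - k_2(x_{n_3} - x_{n_4})| \sim 2^{-j}$. On the diagonal $k_1 = k_2 = k$, the relabelling $(n_1,n_2,n_3,n_4) \mapsto (n_1,n_2,n_4,n_3)$ converts the Diophantine condition into $|x_{n_1} - x_{n_2} + x_{n_3} - x_{n_4}| \leq 2^{-j}/|k|$, so the count inside the dyadic block is bounded by $E^*_{N, 2^{-j}/|k|}$. The off-diagonal contribution $k_1 \neq k_2$ is reduced to the diagonal by a Cauchy--Schwarz (or symmetrisation) step. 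Plugging in $E^*_{N, \gamma} \ll N^{2+\eta} + \gamma N^{3-\delta}$ splits the variance into two pieces: the $N^{2+\eta}$ part (essentially the trivial 4-tuples) contributes $O(N^{-1 + O(\eta)})$ after the sums are performed, while the $\gamma N^{3-\delta}$ part contributes $O(N^{-\delta + O(\eta)})$, with the geometric series over the dyadic parameters being convergent because the weights $\hat\psi(k_i)$ and $\hat\mu(\cdot)$ compensate the growth of the counts. Choosing $\eta$ small in terms of $\delta$, the variance is bounded by $N^{-\eta_0}$ for some fixed $\eta_0 > 0$, hence summable along a suitable subsequence.

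The main obstacle is accurately tracking how the dyadic sizes of $|k_i|$ and of $|k_1(x_{n_1} - x_{n_2}) - k_2(x_{n_3} - x_{n_4})|$ propagate through the reduction to $E^*_{N,\gamma}$: the whole point of Theorem~\ref{th2_new} over Theorem~\ref{th1} is that one must genuinely exploit the $\gamma$-dependence of the hypothesis and not merely the $\gamma = 1$ bound, which by itself yields only $E^*_N \ll N^{3-\delta}$, a bound strictly weaker than the $N^{183/76-\delta}$ required by Theorem~\ref{th1}. The off-diagonal reduction, the careful balancing of the truncation parameter $\eta$ across the smoothing and the hypothesis, and ensuring that the final geometric sums retain a genuine positive power saving in $N$ are where the bulk of the technical work will lie.
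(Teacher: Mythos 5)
Your framework (Fourier expansion, variance against a measure with well-behaved Fourier transform, Chebyshev plus Borel--Cantelli) matches the paper's, and your diagonal computation is correct: for $k_1=k_2=k$ the condition $|k_1(x_{n_1}-x_{n_2})-k_2(x_{n_3}-x_{n_4})|\leq 2^{-j}$ does reduce, after relabelling, to the quantity $E^*_{N,2^{-j}/|k|}$, and this is indeed where the $\gamma$-dependence of \eqref{engamma_ass} enters (the paper implements exactly this by binning the differences $z_m=|x_{n_1}-x_{n_2}|$ into intervals of length $2^{-u}$ in \eqref{b_def_ps}). However, there is a genuine gap at the step you dispose of in one sentence: the claim that the off-diagonal contribution $k_1\neq k_2$ ``is reduced to the diagonal by a Cauchy--Schwarz (or symmetrisation) step.'' No such reduction is available. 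Cauchy--Schwarz over dyadic blocks (as in \eqref{line_1}) only forces $k_1$ and $k_2$ into the \emph{same dyadic range}; within that range they still vary independently, and the condition $|k_1 z_m - k_2 z_n|<1$ with $k_1\neq k_2$ constrains the ratio $z_m/z_n$ to lie near the rational $k_2/k_1$ rather than constraining any signed combination $x_{n_1}-x_{n_2}+x_{n_3}-x_{n_4}$ to be small. The hypothesis \eqref{engamma_ass} says nothing directly about such ratio conditions, so the off-diagonal count cannot be majorised by $E^*_{N,\gamma}$ by elementary rearrangement. This difficulty --- the presence of the coefficients $j_1,j_2$ --- is precisely what the introduction identifies as the obstruction to applying the Rudnick--Technau criterion \eqref{dioph_in}, and it is the reason the paper exists.

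What the paper actually does to cross this gap is the content of Sections \ref{sec_zeta} and \ref{sec_new}: one encodes the binned differences into the Dirichlet-polynomial-like object $P(t)=\sum_h a_h(1+1/T)^{iht}$, shows that the full (diagonal \emph{and} off-diagonal) count is dominated by $\frac{2^u}{T}\int G(t)|P(t)|^2\Phi(t/T)\,dt$ with $G(t)$ essentially $|\zeta(1/2+it)|^2$ smoothed by the kernel $\widehat K$ (Lemmas \ref{lemma_new_2} and \ref{lemma_new_3}), and then estimates that integral using $\int|P|^2\Phi(t/T)\,dt\ll T E^*_{N,2^{-u}}$ (Lemma \ref{lemma_new_1}, where your hypothesis finally enters), a pointwise bound on $|P(0)|$ obtained from $E^*_{N,1}\ll N^{3-\delta}$, and a subconvexity bound $|\zeta(1/2+it)|\ll|t|^{1/6}$ (the general Theorem \ref{th1} needs Ivi\'c's moment bound instead). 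The choice of the scale $2^{-u}$ for the bins, and of $T=2^uN^{\min\{\beta-\ve,1+\ve\}}$ after localising $z_m,z_n$ to $[N^\beta,32N^\beta)$, is what makes all these estimates compatible; none of this is routine bookkeeping that can be deferred. As written, your proposal proves the result only for the diagonal $k_1=k_2$ and leaves the genuinely hard part unaddressed.
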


Theorem \ref{th2_new} is tailor-made for an application to the sequence $x_n = n^\theta$. For that sequence, equation \eqref{engamma_ass} holds provided $\theta > 1$ as a consequence of Lemma \ref{rs_lemma} below. Since that particular problem was a key driving force for writing the present paper, we formulate this conclusion as a theorem rather than just as a corollary.\\

\begin{thm} \label{th2}
For every real number $\theta>1$ and almost every $\alpha \in \mathbb{R}$, the sequence $(n^\theta \alpha)_{n \geq 1}$ has Poissonian pair correlation.\\
\end{thm}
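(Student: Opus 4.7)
The strategy is to obtain Theorem \ref{th2} as a direct application of Theorem \ref{th2_new} to the sequence $x_n = n^\theta$, so the task reduces to verifying the two hypotheses of that theorem: the gap condition and the counting estimate \eqref{engamma_ass} on $E_{N,\gamma}^*$.

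The gap condition is immediate. Since $\theta > 1$, the mean value theorem gives $(n+1)^\theta - n^\theta \geq \theta n^{\theta-1} \geq \theta$ for every $n \geq 1$, so we may take $c = \theta$.

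The substantive step is the verification of \eqref{engamma_ass} for the sequence $x_n = n^\theta$, which requires bounding the number of integer 4-tuples $(n_1,n_2,n_3,n_4)$ with $1 \leq n_i \leq N$ satisfying
\[
|n_1^\theta - n_2^\theta + n_3^\theta - n_4^\theta| < \gamma.
\]
This is precisely what Lemma \ref{rs_lemma} is stated to provide, yielding a bound of the form $\ll_\eta N^{2+\eta} + \gamma N^{3-\delta}$ for some $\delta = \delta(\theta) > 0$ and any $\eta > 0$. Morally, the ``diagonal'' 4-tuples (those with $\{n_1,n_3\} = \{n_2,n_4\}$ as multisets) contribute the $N^{2+\eta}$ term; the off-diagonal contribution is handled by fixing $n_3 \neq n_4$, setting $A = n_4^\theta - n_3^\theta$, and counting pairs $(n_1,n_2)$ with $|n_1^\theta - n_2^\theta - A| < \gamma$ by linearizing via the mean value theorem. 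The fact that $t \mapsto t^\theta$ has a genuinely transcendental derivative for non-integer $\theta$ (and is at least non-linear for integer $\theta \geq 2$) prevents degenerate resonances and produces the $\gamma$-scaling with the savings $N^{-\delta}$.

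With both hypotheses in hand, Theorem \ref{th2_new} immediately delivers the conclusion. The only real obstacle lies not in the deduction itself but in the proof of Lemma \ref{rs_lemma}, which demands a uniform count of near-solutions of the Diophantine inequality across the full range $\gamma \in (0,1]$; this is where the analytic input (the zeta-moment machinery alluded to in the discussion of Theorem \ref{th1}, now applied in a $\gamma$-sensitive form) enters decisively. Given Lemma \ref{rs_lemma}, the present theorem is essentially a corollary of Theorem \ref{th2_new}.
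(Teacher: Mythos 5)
Your overall route is the paper's route: deduce Theorem \ref{th2} from Theorem \ref{th2_new} by checking the gap condition (trivial for $\theta>1$) and the energy bound \eqref{engamma_ass}, the latter coming from the Robert--Sargos estimate in Lemma \ref{rs_lemma}. The exponent bookkeeping is also right: $\gamma B^{4-\theta+\ve}\ll\gamma N^{3-\delta}$ once $\ve<\theta-1$.

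There is, however, one genuine gap. Lemma \ref{rs_lemma} is \emph{not} ``precisely'' the bound on $E^*_{N,\gamma}$: it counts 4-tuples confined to a single dyadic box $\{B+1,\dots,2B\}^4$, whereas $E^*_{N,\gamma}$ ranges over all of $\{1,\dots,N\}^4$, and the cross terms with the $n_i$ in different dyadic blocks are not covered by the lemma. One cannot simply sum the lemma over blocks. The paper closes this by interpreting $E^*_{N,\gamma}$ as $\ll\int_{\R}\bigl(\sum_n e^{2\pi i n^\theta x}\bigr)^4 d\mu_{2\gamma}(x)$ for a kernel $\mu_{2\gamma}$ with nonnegative, compactly supported Fourier transform, splitting the sum over $n$ into $\ll\log N$ dyadic pieces, and applying H\"older's inequality so that only ``pure'' fourth powers over single blocks survive; each of those is then controlled by Lemma \ref{rs_lemma} with parameters $2\gamma$ and $B=2^{\ell-1}$, at the cost of a harmless $(\log N)^3$ factor absorbed into $N^{\eta}$. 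You need this (or an equivalent) reduction for your verification of \eqref{engamma_ass} to be complete. A smaller point: your closing remark misattributes the source of Lemma \ref{rs_lemma} --- it is an exponential-sum estimate of Robert and Sargos, quoted as a black box, and has nothing to do with the zeta-moment machinery; that machinery lives inside the proofs of Theorems \ref{th1} and \ref{th2_new}, and in the latter only a subconvex pointwise bound for $\zeta(1/2+it)$ is needed.
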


As noted above, the conclusion of Theorem \ref{th2} is not true when $\theta = 1$. It seems plausible that the conclusion of the theorem is valid again for $0 < \theta < 1$. However, this cannot be proved with the methods used in the present paper, which break down in the case of a sequence $(x_n)_n$ whose order of growth is only linear or even slower. We will address this aspect at the very end of the paper, where we also formulate some further open problems.\\

In conclusion we note that Technau and Yesha recently obtained a result which is somewhat similar to our Theorem \ref{th2}, but which is ``metric'' in the exponent rather than in a multiplicative parameter. More precisely, they showed that $(n^\theta)_n$ has Poissonian pair correlation for almost all $\theta > 7$. Their paper also contains similar results on higher correlations, which require a larger value of $\theta$. From a technical perspective, their problem is rather different from ours. For details see their paper \cite{ty}.

\section{Preliminaries}

As in the introduction, let $\mathbf{1}_{[-s/N,s/N]} (x)$ denote the indicator function of the interval $[-s/N,s/N]$, extended with period 1. That is, 
$$
\mathbf{1}_{[-s/N,s/N]} (x) =  \left\{ \begin{array}{ll} 1 & \text{if $x - \langle x \rangle \in [-s/N,s/N]$},\\0 & \text{otherwise,} \end{array} \right.
$$
where $\langle x \rangle$ denotes the nearest integer to $x$. We wish to show that under the assumptions of Theorem \ref{th1} we have for almost all $\alpha \in \mathbb{R}$
\begin{equation} \label{conv_ind}
\frac{1}{N} \sum_{\substack{1 \leq m, n \leq N, \\ m \neq n}} \mathbf{1}_{[-s/N,s/N]} (x_m \alpha - x_n \alpha) \to 2s 
\end{equation}
as $N \to \infty$ for all $s \geq 0$. It is well-known that for any $s$ and $N$, and for any positive integer $K$ there exist trigonometric polynomials $f_{K,s,N}^+(x)$ and $f_{K,s,N}^-(x)$ of degree at most $K$ such that 
\begin{equation} \label{f1_app}
f_{K,s,N}^-(x) \leq \mathbf{1}_{[-s/N,s/N]} (x) \leq f_{K,s,N}^+(x)
\end{equation}
for all $x$, and such that
\begin{equation} \label{f2_app}
\int_0^1 f_{K,s,N}^\pm (x) ~dx = 2s/N \pm \frac{1}{K+1}.
\end{equation}
Furthermore, the $j$-th Fourier coefficient $c_j$ of $f_{K,s,N}^-$ satisfies
\begin{equation} \label{Four-c}
|c_j| \leq \min \left(\frac{2s}{N},\frac{1}{\pi |j|} \right) + \frac{1}{K+1}
\end{equation}
for all $j$, and an analogous bound holds for the Fourier coefficients of $f_{K,s,N}^+$. These trigonometric polynomials are called Selberg polynomials, and their construction is described in detail in Chapter 1 of \cite{mont}.\\

Instead of establishing the required convergence relation \eqref{conv_ind} for indicator functions, we will rather work with the trigonometric polynomials $f_{K,s,N}^+$ and $f_{K,s,N}^-$ instead, which is technically more convenient. More precisely, in order to obtain \eqref{conv_ind} it suffices to prove the following. For every fixed positive integer $r$, and for every fixed real number $s \geq 0$, we have
\begin{equation} \label{conv_f}
\frac{1}{N} \sum_{\substack{1 \leq m, n \leq N, \\ m \neq n}} f_{rN,s,N}^+ (x_m \alpha - x_n \alpha) \sim N \int_0^1 f^+_{rN,s,N}(x) dx
\end{equation}
as $N \to \infty$, for almost all $\alpha \in \mathbb{R}$, and the same is true when $f^+$ is replaced by $f^-$. The desired result for indicator functions then follows from \eqref{f1_app} and \eqref{f2_app} and letting $r \to \infty$ (see \cite{rsarnak, rt} for more details). 

To establish \eqref{conv_f} we prove that the ``expected value'' (with respect to $\alpha$) of the left-hand side is asymptotic to the right-hand side, and that the ``variance'' of the left-hand side of \eqref{conv_f} is not too large. An application of Chebyshev's inequality together with the Borel--Cantelli lemma then gives the desired result. As usual in such problems, controlling the expectation is easier than controlling the variance. \\

We will obtain the required bound for the expectation in Section \ref{sec_av}, and the bound for the variance in Sections \ref{sec_var} and \ref{sec_zeta}. In Section \ref{sec_end} we conclude the proof of Theorem \ref{th1}. Section \ref{sec_new} contains all of the necessary modifications for the proof of Theorem \ref{th2_new}, and in Section \ref{sec_theta} we show that the sequence $(n^\theta)_n$ indeed allows an application of Theorem \ref{th2_new}. Finally, in Section \ref{sec_close} we discuss limitations of our method, and outline open problems and directions for future research.

\section{Proof of Theorem \ref{th1}: Controlling expectations} \label{sec_av}

Throughout the argument we assume that a positive integer $r$ and a positive real $s$ are fixed. We write $f_N$ for the function $f^+_{rN,s,N}$, as defined in the previous section (or for the function $f^-_{rN,s,N}$ --- both cases work in exactly the same way). We want to control the ``expected value'' with respect to $\alpha$ of the left-hand side of \eqref{conv_f} as $N \to \infty$. In the case when $(x_n)_{n \geq 1}$ is an integer sequence everything is periodic with period 1, and it is appropriate to integrate over $\alpha \in [0,1]$ with respect to the Lebesgue measure. In our case, when $(x_n)_{n \geq 1}$ is a sequence of reals, we do not have such periodicity. We thus have to integrate over all $\alpha \in \mathbb{R}$ with respect to an appropriate measure $\mu$, which is absolutely continuous with respect to the Lebesgue measure, so that a $\mu$-almost everywhere conclusion implies a Lebesgue-almost everywhere conclusion. A good choice for the measure $\mu$ is the measure whose density with respect to the Lebesgue measure is given by
\begin{equation} \label{measure}
d\mu(x) = \frac{2 (\sin (x/2))^2}{\pi x^2} dx. 
\end{equation}
The Fourier transform of $x \mapsto \frac{2 (\sin (x/2))^2}{\pi x^2}$ is a non-negative real function which is supported on the interval $(-1,1)$, and which is uniformly bounded by $1/\sqrt{2 \pi}$. Note that the measure $\mu$ is normalized such that $\mu(\mathbb{R})=1$.\\

Expanding the function $f_N(x)$ into a Fourier series
\[
\sum_{j \in \mathbb{Z}} c_j e^{2 \pi i j x},
\]
by construction we have $c_j = 0$ when $|j| > rN$, and $|c_j| \leq 2s/N + 1/(rN) \ll N^{-1}$ for all $j$ (recall that $r$ and $s$ are assumed to be fixed). Moreover we have $c_0 = \int_0^1 f_{N}(x) dx$. Using the fact that the Fourier transform of the measure $\mu$ is supported on $(-1,1)$ and uniformly bounded, we obtain
\begin{align}
& \left\vert\int_\R \frac{1}{N} \sum_{\substack{1 \leq m, n \leq N, \\ m \neq n}} f_N (x_m \alpha - x_n \alpha) \dmu - N \int_0^1 f_{N}(x) dx \right\vert \nonumber\\
& \ll  \underbrace{\left(N -\frac{N(N-1)}{N} \right)}_{= 1} \underbrace{\int_0^1 f_{N}(x) dx}_{\ll N^{-1}}  \nonumber\\ 
&   \quad + \frac{1}{N} \sum_{1 \leq \vert j\vert \leq r N} |c_{j}| \left| \int_\R \sum_{\substack{1 \leq m, n \leq N, \\ m \neq n}} e^{2 \pi i j (x_m \alpha - x_n \alpha)} \dmu \right| \nonumber\\
& \ll  N^{-1} + N^{-2} \underbrace{\sum_{1 \leq \vert j\vert \leq r N} \sum_{\substack{1 \leq m, n \leq N, \\ m \neq n}} \mathbf{1} \big(|j (x_m - x_n)| < 1 \big)}_{\ll N \text{ due to the growth assumption on $(x_n)_{n \geq 1}$}} \nonumber \\
& \ll N^{-1},
\end{align} 
where we estimated $|c_j|$ using \eqref{Four-c}. Thus we have
\begin{equation}\label{expect}
\int_\R \frac{1}{N} \sum_{\substack{1 \leq m, n \leq N, \\ m \neq n}} f_N (x_m \alpha - x_n \alpha) \dmu = N \int_0^1 f_{N}(x) dx + O(1/N),
\end{equation} 
as desired. 

Controlling the variances is more difficult, and will be done in the next two sections.

\section{Proof of Theorem \ref{th1}: Controlling variances} \label{sec_var}

We keep the setup as in Section \ref{sec_av} above, that is, we assume that $r$ and $s$ are fixed, and we write $f_N$ for either $f^+_{rN,s,N}$ or $f^-_{rN,s,N}$. Furthermore, we write $h_N$ for the centered version of $f_N$, that is, for the function
\begin{equation} \label{gdef}
h_N(x) = f_N(x) - \int_0^1 f(x) ~dx = \sum_{\substack{j \in \mathbb{Z},\\j \neq 0}} c_j e^{2 \pi i j x}.
\end{equation}
We wish to estimate the ``variance'' of our localized counting function, or more precisely the quantity
\begin{equation} \label{wish_to}
\mathrm{Var}(h_N,\mu):=\int_\R \left(\frac{1}{N} \sum_{\substack{1 \leq m, n \leq N, \\ m \neq n}} h_N (x_m \alpha - x_n \alpha)\right)^2 \dmu. 
\end{equation} 
The following bound on $\mathrm{Var}(h_N,\mu)$ is the crucial ingredient in our proof of Theorem \ref{th1}.
\begin{lemma}\label{lem:var_bound}
For every $\ve >0$ we have, as $N \to \infty$,
\begin{equation*} 
    \mathrm{Var}(h_N, \mu) \ll \max \left(N^{-\ve/8} + N^{-183/89 + 3\ve} (E_N^*)^{76/89}, E_N^* N^{-2.49+ 4\ve} \right).
\end{equation*}
\end{lemma}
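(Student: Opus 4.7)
My plan begins with the Fourier expansion of $h_N$. By \eqref{gdef} and \eqref{Four-c} (applied with $K = rN$), we have $h_N(x) = \sum_{0 < |j| \leq rN} c_j e^{2\pi i j x}$ with coefficients satisfying $|c_j| \ll \min(N^{-1}, |j|^{-1})$. Squaring the inner sum in \eqref{wish_to}, expanding, and integrating term-by-term against $d\mu$, I would exploit the fact that the Fourier transform of $\mu$ is uniformly bounded and supported in $(-1,1)$; this restricts the possible phases $j_1 (x_{m_1}-x_{n_1}) + j_2 (x_{m_2}-x_{n_2})$ to a bounded range. The upshot is a bound of the form
$$\mathrm{Var}(h_N, \mu) \ll \frac{1}{N^2} \sum_{0 < |j_1|,|j_2| \leq rN} |c_{j_1}||c_{j_2}|\, T(j_1,j_2),$$
where $T(j_1,j_2)$ counts 4-tuples $(m_1, n_1, m_2, n_2)$ with $m_i \neq n_i$ satisfying the weighted Diophantine inequality $|j_1(x_{m_1}-x_{n_1}) + j_2(x_{m_2}-x_{n_2})| < C$ for an absolute constant $C$ depending only on the support of $\widehat\mu$. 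This is the analogue of the Rudnick--Technau quantity \eqref{dioph_in}, now arising as a direct by-product of the Fourier-analytic setup.

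The next step is a dyadic decomposition $|j_i| \asymp J_i$, on which $|c_{j_i}| \ll J_i^{-1}$ uniformly. The ``diagonal'' contributions (where $m_1 = m_2$ and $n_1 = n_2$ trivially, or where one of the differences $x_{m_i} - x_{n_i}$ is forced to be very small) are controlled by the growth hypothesis $x_{n+1} - x_n \geq c$, which limits the density of possible values of the differences, and will produce harmless lower-order errors of size $N^{-\ve/8}$. The genuinely off-diagonal contribution is where the main difficulty lies: the coefficients $j_1, j_2$ appearing inside the inequality prevent any naive reduction to $E_N^*$.

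To handle this, I would rely on the zeta function machinery of Section \ref{sec_zeta}, in accordance with the paper's title. The idea is to rewrite the weighted dyadic count as a mean-value integral over the critical line of a suitable power of $|\zeta(1/2+it)|$ against a kernel built from the unweighted counting function $E_N^*$. Applying H\"older's inequality with conjugate exponents $89/13$ and $89/76$, together with Ivic's high-moment bound
$$\int_0^T |\zeta(1/2+it)|^{178/13}\, dt \ll T^{1+\ve},$$
then yields the first alternative $N^{-183/89 + 3\ve}(E_N^*)^{76/89}$. The second alternative $E_N^* N^{-2.49 + 4\ve}$ comes from a complementary, cruder estimate (linear in $E_N^*$, essentially via a lower moment of $\zeta$) that dominates in the opposite parameter regime, where applying the sharp high-moment bound would be wasteful.

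The principal obstacle is the zeta function step itself: translating a count of solutions to a weighted Diophantine inequality into the combination of a zeta moment with an $E_N^*$-controlled factor. Everything before that point --- the Fourier expansion, the dyadic decomposition, the diagonal contribution --- is essentially routine once the zeta-theoretic representation is in hand, and the precise maximum appearing in the statement is exactly the trade-off between Ivic's sharp moment bound and the cruder linear estimate.
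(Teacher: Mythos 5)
Your high-level reduction (expand $h_N$ into its Fourier series, square, use that $\widehat{\mu}$ is nonnegative and supported in $(-1,1)$, and decompose $j_1,j_2$ dyadically) matches the paper's opening step, which leads to \eqref{line_1}. But the rest has a genuine gap in two respects. First, you misquote the key analytic input: Ivi\'c's bound for $A=178/13$ is $\int_0^T |\zeta(1/2+it)|^{A}\,dt \ll T^{2+3(A-12)/22+\ve}=T^{29/13+\ve}$, \emph{not} $\ll T^{1+\ve}$. The latter is a Lindel\"of-strength statement, unknown for any moment beyond the fourth; the whole reason Theorem \ref{th1} carries the threshold $183/76$ rather than $3$ is precisely that the unconditional exponent is $29/13>1$, and the $183/89$ in the lemma is a direct fingerprint of that $29/13$ (your input, if it were true, would give the strictly stronger $N^{-215/89+\ve}(E_N^*)^{76/89}$, essentially the Lindel\"of-conditional bound the paper mentions). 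So the central analytic ingredient of your argument is not available.

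Second, the step you defer as ``the principal obstacle'' is essentially the entire proof, and your sketch of it is not how the argument can go. One cannot apply a zeta moment directly to the weighted count: the paper first groups the differences $z_m=|x_{m'}-x_{n'}|$ into unit intervals (coefficients $b_k$), then into multiplicative intervals $I_h$ of ratio $1+1/T$ (coefficients $a_h$), builds the generalized Dirichlet polynomial $P(t)=\sum_h a_h(1+1/T)^{iht}$ whose weighted $L^2$-norm is $\ll TE_N^*$ (Lemma \ref{lemma_new_1}; this is the only place $E_N^*$ enters), majorizes the dyadic count by $\frac{2^u}{T}\int \sum_{j_1,j_2}\widehat{K}(\log j_1j_2)(j_1j_2)^{-1/2}(j_1/j_2)^{it}|P(t)|^2\Phi(t/T)\,dt$ via a Fej\'er-type kernel $K$, and converts the $j$-sum into $\int|\zeta(1/2+it+iu)|^2K(u)\,du$ plus boundary terms through the convolution formula of Lemma \ref{fourier}; only then does H\"older with $(A,A,89/76)$ apply. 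Your proposal also misattributes both error terms: the $N^{-\ve/8}$ comes not from diagonal tuples but from the contribution of $|t|\le 1$ (bounded via $|P(0)|^2\ll N^4$) together with the $\zeta(1\pm 2it)$ boundary terms; and the second alternative $E_N^*N^{-2.49+4\ve}$ comes not from ``a lower moment of $\zeta$ in the opposite parameter regime'' but from a separate treatment of pairs with $\min\{z_m,z_n\}<N^{1.01}$, where $T$ must be lowered to $2^uN^{\beta}$ to keep $T\ll 2^u z_n$ and where the improved bound $|P(0)|^2\ll E_N^*N^{\beta}$ (Cauchy--Schwarz plus the spacing hypothesis $x_{n+1}-x_n\ge c$) is what makes the estimate close. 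Without these constructions the proposal does not constitute a proof.
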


For the convenience of the reader, we note at this point that our assumption that there is some $\delta > 0$ such that $E_N^* \ll N^{183/76 - \delta}$ ensures together with Lemma \ref{lem:var_bound} that there is some $\delta' > 0$ such that $\mathrm{Var}(h_N, \mu) \ll N^{-\delta'}$, which is sufficient to deduce Theorem \ref{th1} (see Section \ref{sec_end} for details). We also note that conditionally under the Lindel\"of hypothesis the bound which follows from our method is $\mathrm{Var}(h_N, \mu) \ll N^{-3+\ve} E_N^*$.

\section{Proof of Lemma \ref{lem:var_bound}: Lattice point counting via the Riemann zeta function} \label{sec_zeta}
\subsection{A first reduction} Squaring out in \eqref{wish_to} and using again the properties of the Fourier transform of the measure $\mu$, we can bound $\mathrm{Var}(h_N, \mu) $ by 
\begin{align*}
&\int_\R \frac{1}{N^2} ~\sum_{\substack{1 \leq n_1, n_2, n_3, n_4 \leq N, \\ n_1 \neq n_2, ~n_3 \neq n_4}} ~\sum_{\substack{j_1,j_2 \in \mathbb{Z},~j_1, j_2 \neq 0\\|j_1|,|j_2| \leq r N}} ~\underbrace{|c_{j_1} c_{j_2}|}_{\ll N^{-2}} e^{2 \pi i \alpha (j_1 (x_{n_1} - x_{n_2}) - j_2 (x_{n_3} - x_{n_4}))} \dmu \nonumber\\
& \ll \frac{1}{N^4} ~\sum_{\substack{1 \leq n_1, n_2, n_3, n_4 \leq N, \\ n_1  > n_2, ~n_3 > n_4}} ~\sum_{\substack{1 \leq j_1,j_2 \leq r N}}~ \mathbf{1} \big(|j_1 (x_{n_1} - x_{n_2}) - j_2 (x_{n_3} - x_{n_4})| < 1 \big), \nonumber
\end{align*}
thereby essentially arriving at \eqref{dioph_in}. For technical reasons, in this paper we prefer to localize the variables $j_1,j_2$ into dyadic regions and thus apply the Cauchy--Schwarz inequality to \eqref{wish_to}. To simplify later formulas we also replace the differences $x_{n_1} - x_{n_2}$ and $x_{n_3} - x_{n_4}$ by their respective absolute values using the parity of $h_N$. Then, writing $U$ for the smallest integer for which $2^U \geq r N$, we can bound $\mathrm{Var}(h_N, \mu)$ by
\begin{align}
 &  \int_{\mathbb{R}} \left(\frac{1}{N}~ \sum_{u=1}^U  ~\sum_{\substack{1 \leq m, n \leq N, \\ m \neq n}} ~\sum_{ 2^{u-1}  \leq |j| < 2^u} c_j e^{2 \pi i j |x_m - x_n| \alpha} \right)^2 \dmu   \nonumber\\
& \ll \frac{1}{N^2} \int_{\mathbb{R}} \left(\sum_{k=1}^U 1\right) \sum_{u=1}^{U}\left\vert \sum_{\substack{1 \leq m, n \leq N, \\ m \neq n}}~ \sum_{2^{u-1}  \leq |j| < 2^u} c_j e^{2 \pi i j |x_m - x_n| \alpha} \right\vert^2 \dmu \nonumber\\
& \ll  \frac{\log N}{N^4} ~\sum_{u=1}^U ~\sum_{\substack{1 \leq n_1, n_2, n_3, n_4 \leq N, \\ n_1 \neq n_2, ~n_3 \neq n_4}} ~\sum_{\substack{2^{u-1} \leq j_1,j_2 < 2^u}} ~\mathbf{1} \left( \Big|j_1 |x_{n_1} - x_{n_2}| - j_2 |x_{n_3} - x_{n_4}| \Big| < 1 \right). \label{line_1}
\end{align}
Thus we have reduced the problem of estimating the variance to a problem of bounding the number of solutions of a Diophantine inequality. 

\subsection{Counting solutions by using the Riemann zeta function} We will relate the counting problem in Equation \eqref{line_1} to the problem of bounding a twisted moment of the Riemann zeta function. Before we return to the proof, we point out the difference between the real-number case (in this paper) and the corresponding results for the case of $(x_n)_{n \geq 1}$ being an integer sequence. In the integer case, the problem of estimating the variance of the pair correlation function can be reduced to counting solutions of $j_1 (x_{n_1} - x_{n_2}) = j_2 (x_{n_3} - x_{n_4})$. Note that this is in accordance with the situation in the present paper, where we count solutions to $|j_1 (x_{n_1} - x_{n_2}) - j_2 (x_{n_3} - x_{n_4})|<1$, with the difference that in the integer case ``$<1$'' implies ``$=0$''. The number of solutions of the counting problem in the integer case is essentially governed by what is called a GCD sum. It is known that such sums have a connection with the Riemann zeta function (see \cite{a1,hilb}), and strong estimates for such sums were obtained in \cite{abs,bs,dlbT}. Our approach below is motivated by a beautiful argument of Lewko and Radziwi{\l}{\l} \cite{lr}, who showed how the relevant GCD sum can be estimated in terms of a twisted moment of a random model of the Riemann zeta function on the critical line.\footnote{See also \cite{dlbmt} for links between twisted moments of character sums and GCD sums. Furthermore, see \cite{shkr_new} for a very recent paper of Shkredov, where he applies GCD sums and methods from \cite{lr} to give upper bounds for the maximal length of arithmetic progressions contained in sets with small product set.} The randomization was crucial in their argument for different reasons, one being that the required distributional estimates for extreme values of the actual Riemann zeta function are not known unconditionally. Their argument relied crucially on the fundamental theorem of arithmetic, and thus on the fact that they were dealing with integer sequences. In the real-number case the situation is much more delicate. We will relate our counting problem to a convolution formula for the Riemann zeta function. The kernel will be chosen for its good properties with respect to the Fourier transform (positivity and localized support) which allow to overcount without substantial loss. To summarize, in our argument below we will use of a combination of ideas from \cite{a1,bs,BSconv,dlbT} and \cite{lr}.\\

Let $(x_n)_{n \geq 1}$ be the sequence from the statement of Theorem \ref{th1}. Let $M = N^2-N$, and let $\{z_1, \dots, z_M\}$ be the multi-set of all absolute differences $\{|x_m - x_n|:~1 \leq m, n \leq N,~m \neq n\}$, meaning that we allow repetitions in the definition. For a given positive integer $u$ with $2^u \leq 2 r N$, we wish to estimate
\begin{equation} \label{to_sum}
\sum_{1 \leq m,n \leq M} ~\sum_{\substack{2^{u-1} \leq j_1,j_2 < 2^u}} ~\mathbf{1} \big(|j_1 z_m - j_2 z_n| < 1 \big).
\end{equation}
We write $\zeta(\sigma+it)$ for the Riemann zeta function.
We also write $\Phi(t)=e^{-t^2/2}$, and note that this function has a positive Fourier transform given by $\widehat{\Phi}= \sqrt{2\pi} \Phi$.
Throughout the proof $\ve>0$ is a small constant, and we take it for granted that $N$ is ``large''.\\

Our argument proceeds by splitting into two cases depending on the size of $\min\{z_m,z_n\}$. We first treat the case when $z_m,z_n$ are both at least of size $N^{1.01}$. We then treat the case when one of $z_m$ or $z_n$ is ``small'', which because of our dyadic splitting essentially amounts to saying that both variables are small.\\

$\bullet$ {\bf Case 1:~Counting solutions for $z_m,z_n \geq N^{1.01}$.} \\

Let $u$ be given such that $2^{u-1} \leq j_1,j_2 \leq 2^u$. Set $T = 2^u N^{1+\ve/2}$. For any integer $k \geq N^{1.01}$, we set
\begin{equation} \label{b_k_def}
b_k = \sum_{m=1}^M \mathbf{1} \big(z_m \in [k, k+1) \big),
\end{equation} 
while for $k < N^{1.01}$ we set $b_k=0$. Clearly we have
$$
\sum_{k=1}^\infty b_k \leq M = N^2 - N.
$$
We split the interval $[1, \infty)$ into a disjoint union of intervals $I_h$ for $h \geq 0$, where
$$
I_h = \left[ \left\lceil \left(1 + \frac{1}{T} \right)^h \right\rceil , \left\lceil \left(1 + \frac{1}{T} \right)^{h+1} \right\rceil \right),
$$
and we set 
\begin{equation} \label{ah}
a_h = \left( \sum_{\substack{k \in I_h}} b_k^2 \right)^{1/2}, \qquad h \geq 0.
\end{equation}

Finally, we define a function
\begin{equation} \label{P_def}
P(t) = \sum_{h=0}^\infty a_h \left(1 + \frac{1}{T} \right)^{iht}.
\end{equation}

The following four lemmas correspond to the key steps in our Case 1 analysis.

\begin{lemma}[Controlling the square-integral of $P$ in terms of the additive energy] \label{lemma_new_1} We have
\begin{equation}
\int_\R |P(t)|^2 \Phi(t/T)dt \ll T E_N^*.
\end{equation} 
\end{lemma}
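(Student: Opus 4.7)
The plan is to open out the square and exploit that $\Phi$ is a Gaussian. Expanding
$$
|P(t)|^2 \;=\; \sum_{h_1, h_2 \geq 0} a_{h_1} a_{h_2} \left(1 + \tfrac{1}{T}\right)^{i(h_1 - h_2) t},
$$
integrating against $\Phi(t/T)$ and completing the square (equivalently, using the already noted $\widehat{\Phi} = \sqrt{2\pi}\,\Phi$), each frequency contributes a Gaussian integral
$$
\int_\R e^{i h t \log(1 + 1/T)}\, \Phi(t/T)\, dt \;=\; T\sqrt{2\pi}\, \exp\!\left(-\tfrac12 h^2 T^2 \log^2(1 + 1/T)\right).
$$
Since $T\log(1 + 1/T) \to 1$ as $T \to \infty$ and is bounded away from $0$ for $T \geq 1$, this is of size $\ll T e^{-c h^2}$ for some absolute constant $c > 0$, so the $(h_1,h_2)$ sum acquires rapid Gaussian decay in $h_1 - h_2$.

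The second step is to handle $\sum_{h_1, h_2} a_{h_1} a_{h_2} e^{-c(h_1 - h_2)^2}$. By AM--GM, $a_{h_1} a_{h_2} \leq (a_{h_1}^2 + a_{h_2}^2)/2$, and since $\sum_{h \in \mathbb{Z}} e^{-c h^2}$ converges to an absolute constant, this is $\ll \sum_h a_h^2$. By the definition of $a_h$ and the fact that $\{I_h\}_{h \geq 0}$ forms a partition of the positive integers into geometrically expanding blocks,
$$
\sum_{h \geq 0} a_h^2 \;=\; \sum_{h \geq 0} \sum_{k \in I_h} b_k^2 \;=\; \sum_{k \geq 1} b_k^2,
$$
so the problem reduces to bounding $\sum_k b_k^2$ in terms of $E_N^*$.

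From the definition of $b_k$, the sum $\sum_k b_k^2$ counts pairs $(m_1, m_2) \in \{1,\ldots,M\}^2$ such that $z_{m_1}$ and $z_{m_2}$ lie in a common unit interval $[k, k+1)$; in particular $|z_{m_1} - z_{m_2}| < 1$ for every such pair. Writing $z_{m_\ell} = x_{i_\ell} - x_{j_\ell}$ with $i_\ell > j_\ell$ (valid because the hypothesis $x_{n+1} - x_n \geq c$ makes $(x_n)_{n \geq 1}$ strictly increasing), the inequality becomes $|x_{i_1} - x_{j_1} - x_{i_2} + x_{j_2}| < 1$, and under the relabelling $(n_1, n_2, n_3, n_4) := (i_1, j_1, j_2, i_2)$ this is precisely a solution counted by $E_N^*$ via \eqref{dioph_in_1}. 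Assembling the three steps gives $\int_\R |P(t)|^2 \Phi(t/T)\, dt \ll T E_N^*$. The main potential pitfall is that the Gaussian decay in $h_1 - h_2$ must genuinely dominate the off-diagonal contributions without any logarithmic losses; because $\widehat{\Phi}$ is a true Gaussian rather than merely a compactly supported bump this is automatic, which is precisely why $\Phi$ was chosen as the test function.
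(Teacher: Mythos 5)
Your proof is correct and follows essentially the same route as the paper's: expand $|P(t)|^2$, use $\widehat{\Phi}=\sqrt{2\pi}\,\Phi$ together with $T\log(1+1/T)\gg 1$ to get Gaussian decay in $h_1-h_2$, reduce to $\sum_h a_h^2=\sum_k b_k^2$, and identify pairs $z_{m_1},z_{m_2}$ in a common unit interval with solutions of \eqref{dioph_in_1}. The only cosmetic difference is your use of AM--GM where the paper invokes Cauchy--Schwarz to dispose of the off-diagonal terms.
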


\begin{lemma}[Counting solutions of Diophantine inequalities in terms of $a_h$] \label{lemma_new_2} We have
\begin{align*}
\sum_{2^{u-1} \leq j_1, j_2 < 2^u}  ~\sum_{1 \leq m,n \leq M} ~\mathbf{1} \big(|j_1 z_m - j_2 z_n| < 1 \big) & \ll \sum_{2^{u-1} \leq j_1, j_2 \leq 2^{u}} \sum_{\substack{h_1, h_2 \geq 0, \\ \left|\left(1 + \frac{1}{T} \right)^{h_1-h_2} - \frac{j_2}{j_1} \right| \leq \frac{4}{T}}} a_{h_1} a_{h_2}.
\end{align*}
\end{lemma}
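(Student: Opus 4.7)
The plan is to start from the identity
\[
\sum_{2^{u-1} \le j_1, j_2 < 2^u}\, \sum_{1 \le m, n \le M} \mathbf{1}\bigl(|j_1 z_m - j_2 z_n| < 1\bigr) = \sum_{j_1, j_2} \sum_{h_1, h_2 \ge 0} \sum_{\substack{m, n: \\ z_m \in I_{h_1},\, z_n \in I_{h_2}}} \mathbf{1}\bigl(|j_1 z_m - j_2 z_n|<1\bigr),
\]
and to decompose the Diophantine condition into (i) a constraint on the logarithmic-scale indices $(h_1, h_2)$ alone, namely $|(1+1/T)^{h_1-h_2} - j_2/j_1| \le 4/T$, and (ii) for each admissible quadruple $(j_1, j_2, h_1, h_2)$, a constraint on the integer buckets containing $(z_m, z_n)$ that admits only $O(1)$ choices. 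A single Cauchy--Schwarz then produces the factor $a_{h_1} a_{h_2}$ on the right-hand side.

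For (i), suppose $|j_1 z_m - j_2 z_n| < 1$ with $z_m \in I_{h_1}$ and $z_n \in I_{h_2}$. Dividing by $j_1 z_n$ yields $z_m/z_n = j_2/j_1 + O\bigl(1/(j_1 z_n)\bigr)$, and the Case~1 threshold $z_n \ge N^{1.01}$ combined with $j_1 \ge 2^{u-1}$ bounds this error by $O(1/T)$, provided $\varepsilon$ is sufficiently small (recall $T = 2^u N^{1+\varepsilon/2}$). On the other hand, the containment $z_m \in [\lceil Q^{h_1}\rceil, \lceil Q^{h_1+1}\rceil)$ with $Q = 1+1/T$ forces $z_m/Q^{h_1} = 1 + O(1/T)$, where $Q^{h_1} \gtrsim N^{1.01}$ is used to absorb the integer-rounding slack in $\lceil\cdot\rceil$ into a single multiplicative error of size $1/T$; the analogous bound holds for $z_n$. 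Taking ratios and using $Q^{h_1-h_2} \asymp j_2/j_1 \le 2$ gives $|j_2/j_1 - Q^{h_1-h_2}| \ll 1/T$, and the explicit constant $4$ emerges from a careful bookkeeping of these errors.

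For (ii), if $(m, n)$ contributes to the inner sum then $z_m \in [k_1, k_1+1)$ and $z_n \in [k_2, k_2+1)$ for some integers $k_1 \in I_{h_1}$ and $k_2 \in I_{h_2}$, and the inequality forces $|j_1 k_1 - j_2 k_2| \le j_1 + j_2 + 1 \le 3 \cdot 2^u$. For fixed $k_1$, this restricts $k_2$ to an interval of length at most $6 \cdot 2^u/j_2 \le 12$ around $j_1 k_1/j_2$, hence to $O(1)$ integer values; the symmetric statement holds with $k_1$ and $k_2$ swapped. Since each pair $(k_1, k_2)$ accommodates at most $b_{k_1} b_{k_2}$ pairs $(m, n)$,
\[
\sum_{\substack{m, n: \\ z_m \in I_{h_1},\, z_n \in I_{h_2}}} \mathbf{1}\bigl(|j_1 z_m - j_2 z_n|<1\bigr) \;\le\; \sum_{\substack{k_1 \in I_{h_1},\, k_2 \in I_{h_2} \\ |j_1 k_1 - j_2 k_2| \le 3 \cdot 2^u}} b_{k_1} b_{k_2},
\]
and a Cauchy--Schwarz on the two-dimensional sum, using the $O(1)$-cardinality of both the $k_1$- and $k_2$-fibres of the constraint, bounds this by
\[
\Bigl(\sum_{k_1 \in I_{h_1}} b_{k_1}^2\Bigr)^{1/2} \Bigl(\sum_{k_2 \in I_{h_2}} b_{k_2}^2\Bigr)^{1/2} \cdot O(1) \;=\; O\bigl(a_{h_1} a_{h_2}\bigr).
\]
Summing over $(h_1, h_2)$ satisfying the closeness condition from (i), and then over $(j_1, j_2)$, yields the lemma.

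The main obstacle is step (i): the integer-rounding in the definition of $I_h$, the multiplicative $1/T$-precision of the grid, and the additive tolerance $1$ in the Diophantine inequality must all be reconciled under the Case~1 threshold $N^{1.01}$ and the specific choice $T = 2^u N^{1+\varepsilon/2}$, so that every admissible quadruple on the left-hand side lands inside the stated tolerance $4/T$ on the right. Everything else --- the passage from pairs $(m,n)$ to integer buckets $(k_1, k_2)$, the counting of fibres, and the final Cauchy--Schwarz --- is comparatively routine once the correct scaling $T = 2^u N^{1+\varepsilon/2}$ is in place.
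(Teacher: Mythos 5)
Your proposal reproduces the paper's proof of Lemma \ref{lemma_new_2} in both of its steps: first, the localization of $(h_1,h_2)$ via $\left|\frac{z_m}{z_n}-\frac{j_2}{j_1}\right|<\frac{1}{j_1 z_n}\leq\frac{1}{T}$ (using $z_n\geq N^{1.01}$, $j_1\geq 2^{u-1}$ and $T=2^uN^{1+\ve/2}$) combined with the multiplicative spacing of the grid $\{Q^h\}$, $Q=1+1/T$; second, the passage to unit buckets and a Cauchy--Schwarz producing $a_{h_1}a_{h_2}$. In the second step the paper assumes $j_1\geq j_2$ without loss of generality and exploits the injectivity of $k\mapsto\lceil j_1k/j_2\rceil$ to organize the sum as $\sum_{-4\leq v\leq 3}\sum_k b_kb_{\ell(k)+v}$, whereas you count the fibres of the constraint $|j_1k_1-j_2k_2|\ll 2^u$ symmetrically in both coordinates; these are the same argument.

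The one place where you make a quantitative claim that does not hold as stated is the assertion that the ceiling-rounding in $I_h=[\lceil Q^{h}\rceil,\lceil Q^{h+1}\rceil)$ is ``absorbed into a single multiplicative error of size $1/T$'' because $Q^{h_1}\gtrsim N^{1.01}$. The rounding contributes a multiplicative error of size $Q^{-h}\approx 1/z$, and $1/z\leq 1/T$ only when $z\geq T$. Since $T=2^uN^{1+\ve/2}$ can be as large as roughly $N^{2+\ve/2}$ while Case~1 only guarantees $z_m,z_n\geq N^{1.01}$, for large $u$ the rounding error is of size $N^{-1.01}\gg 1/T$, and a solution with $z_m\in I_{h_1}$, $z_n\in I_{h_2}$ need not satisfy $|Q^{h_1-h_2}-j_2/j_1|\leq 4/T$ (consider e.g.\ $j_1z_m=j_2z_n$ with $z_m\neq z_n$ integers of size about $N^{1.01}$: then $z_m/z_n=j_2/j_1$ exactly, but $Q^{h_1-h_2}$ generically deviates from this ratio by about $1/z_m\gg 4/T$). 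You correctly single this reconciliation out as the main obstacle, but the resolution you offer works only in the regime $T\lesssim N^{1.01}$. To be fair, the paper's own proof passes over exactly the same point --- it asserts that $z_m/z_n$ lies between $Q^{h_1-h_2-1}$ and $Q^{h_1-h_2+1}$, which ignores the additive $+1$ introduced by the ceilings --- so this is a gap you share with the authors rather than a misreading of their argument. It is not merely a matter of constants, though: the bound $\leq 4/T$ is precisely what feeds the lower bound $\widehat{\Phi}(T\log(\cdot))\gg 1$ in Lemma \ref{lemma_new_3}, so the mismatch for large $u$ genuinely needs repair (compare Section \ref{sec_new}, where the analogous step is sound because the buckets there have length $2^{-u}$ and the rounding error really is $O(1/T)$).
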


The next step, in Lemma \ref{lemma_new_3}  below, is to relate the sum on the right-hand side of the equation above to a complex integral, where the term $ \sum_{j_1,j_2 \geq 1} (j_1 j_2)^{-1/2} (j_1/j_2)^{it}$ can be roughly interpreted as $|\zeta(1/2+it)|^2$. However, instead of simply using a truncated expression or an approximate functional equation for $\zeta(1/2+it)$, we will rather use of a convolution formula to improve the analysis near $t = 0$. To do so, we introduce the function $K$ defined by
\[
K(u):=\frac {\sin^2( (1+\ve/4) u\log N)}{\pi u^2 (1 + \ve/4) (\log N)},
\]
whose Fourier transform is given by
\[\widehat K(\xi)=\max\left(1-\frac {|\xi|}{2 (1 + \ve/4) \log N}, 0\right).\]
A similar idea was also fruitfully used in a paper of Bondarenko and Seip \cite{BSconv}. The function $K$ is chosen in such a way that we have $\widehat{K} (\log j_1 j_2) \gg  1 - \frac{2 (\log rN)}{2 (1+ \ve/4) \log N} \gg 1$ (where we suppress the dependence on the constants $\ve$ and $r$).\\

\begin{lemma}[Counting solutions of the Diophantine inequality by complex integration] \label{lemma_new_3} 
We have 
 \begin{align}
\sum_{2^{u-1} \leq j_1, j_2 < 2^u} \sum_{\substack{h_1 \geq 0, h_2 \geq 0,\\\left| \left( 1 + \frac{1}{T} \right)^{h_1 - h_2} - \frac{j_2}{j_1} \right| \leq \frac{4}{T}}} a_{h_1} a_{h_2} \ll \frac{2^u}{T} \int_{\mathbb{R}} \sum_{j_1,j_2 \geq 1} \frac{\widehat{K}(\log j_1j_2)}{(j_1 j_2)^{1/2}} \bigg ( \frac{j_1}{j_2} \bigg )^{it} |P(t)|^2  \Phi(t/T) dt. \nonumber
 \end{align} 
\end{lemma}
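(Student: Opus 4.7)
The plan is to start from the right-hand side of the claimed inequality, expand $|P(t)|^2$ explicitly as a double sum, carry out the resulting Gaussian integral in $t$ in closed form, and then exploit the non-negativity of every factor to bound the outcome from below by the restricted sub-sum matching the left-hand side. Concretely, substituting
\[
|P(t)|^2 \;=\; \sum_{h_1, h_2 \geq 0} a_{h_1} a_{h_2}\,\exp\!\bigl(i(h_1-h_2) t \log(1+1/T)\bigr)
\]
into the right-hand side, swapping sum and integral, and using $\widehat{\Phi} = \sqrt{2\pi}\,\Phi$ in the form $\int_{\mathbb{R}} e^{iAt}\Phi(t/T)\,dt = T\sqrt{2\pi}\,\Phi(AT)$, the right-hand side becomes
\[
2^u \sqrt{2\pi} \sum_{j_1,\,j_2 \geq 1}\;\sum_{h_1,\,h_2 \geq 0} \frac{\widehat K(\log j_1 j_2)\, a_{h_1}\, a_{h_2}}{(j_1 j_2)^{1/2}}\;\Phi(T A_{j,h}),
\]
where $A_{j,h} := \log(j_1/j_2) + (h_1-h_2)\log(1+1/T) = \log(\rho\, j_1/j_2)$ with $\rho := (1+1/T)^{h_1-h_2}$.

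Every factor in this sum is non-negative: the $a_h$ by their definition in \eqref{ah}, and $\widehat K$ and $\Phi$ by construction. The sum therefore dominates its restriction to the range $j_1, j_2 \in [2^{u-1}, 2^u)$ and $|\rho - j_2/j_1| \leq 4/T$, which is precisely the set of indices contributing to the left-hand side of the lemma. On this range three elementary lower bounds suffice. First, $(j_1 j_2)^{-1/2} \geq 2^{-u}$. Second, since $2^u \leq 2rN$ by the choice of $U$, one has $\log(j_1 j_2) \leq 2\log(2rN) = (2+o(1))\log N$ while the normalizing constant inside $\widehat K$ is $2(1+\ve/4)\log N$, so $\widehat K(\log j_1 j_2) \gg 1$ for all $N$ large enough. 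Third, if $|\rho - j_2/j_1| \leq 4/T$ then $\rho\,j_1/j_2 \in [1/2, 3/2]$ for $N$ large (since $j_1/j_2 \leq 2$), and applying $|\log y| \leq 2|y-1|$ on this interval gives $|A_{j,h}| \leq 2\,(4/T)(j_1/j_2) \leq 16/T$, so $\Phi(TA_{j,h}) \geq \Phi(16) > 0$.

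Multiplying the three lower bounds, the explicit factor $2^u$ in front of the sum cancels the $2^{-u}$ from the first bound, leaving an absolute positive constant times the left-hand side of the lemma, which is the required inequality. I expect the main technical point to be the third bound above, where one must convert the additive constraint $|\rho - j_2/j_1| \leq 4/T$ into a small bound on $|\log(\rho\, j_1/j_2)|$; this conversion is clean precisely because the dyadic localization $j_1, j_2 \in [2^{u-1}, 2^u)$ keeps the ratio $j_1/j_2$ bounded by an absolute constant, which is exactly what allows the additive and multiplicative forms of the near-diagonal constraint to be interchanged without loss. No step seems likely to require any additional ingredient beyond these.
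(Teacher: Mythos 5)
Your proposal is correct and follows essentially the same route as the paper: the paper runs the identical chain in the opposite direction, passing from the restricted sum to the full $\widehat{K}$- and $\widehat{\Phi}$-weighted sum using exactly the positivity of $a_h$, $\widehat{K}$, $\Phi$, $\widehat{\Phi}$, the lower bounds $\widehat{K}(\log j_1 j_2)\gg 1$ and $\widehat{\Phi}\bigl(T\log\bigl(\tfrac{j_1}{j_2}(1+1/T)^{h_1-h_2}\bigr)\bigr)\gg 1$ on the constrained range, and $(j_1j_2)^{1/2}\asymp 2^u$, before rewriting $\widehat{\Phi}$ as the $t$-integral. Your Gaussian-integral evaluation is just the paper's middle display computed explicitly, so no gap.
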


\begin{lemma}[Estimating the complex integral] \label{lemma_new_4} We have
\begin{equation}
\frac{2^u}{T} \int_{\mathbb{R}} \sum_{j_1,j_2 \geq 1} \frac{\widehat{K}(\log j_1j_2)}{(j_1 j_2)^{1/2}} \bigg ( \frac{j_1}{j_2} \bigg )^{it} |P(t)|^2  \Phi(t/T) dt \ll N^{4-\ve/4} + N^{173/89+ 2\ve} (E_N^*)^{76/89}.
\end{equation}
\end{lemma}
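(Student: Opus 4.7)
The plan is to identify the inner double sum
$$ Z_N(t) := \sum_{j_1, j_2 \geq 1} \widehat{K}(\log j_1 j_2)(j_1 j_2)^{-1/2}(j_1/j_2)^{it} $$
as a finite sum (truncated to $j_1 j_2 \leq N^{2+\ve/2}$ by the compact support of $\widehat{K}$) which plays the role of a smoothed $|\zeta(1/2+it)|^2$. Using the Fourier identity $\widehat{K}(\log j_1 j_2) = \int K(u)(j_1 j_2)^{-iu}\,du$ one formally obtains
$$ Z_N(t) = \int K(u)\,\zeta(1/2+iu-it)\,\zeta(1/2+iu+it)\,du, $$
which must be made rigorous through a truncated Dirichlet series plus approximate functional equation for $\zeta(1/2+it)$, in the spirit of the argument of Lewko and Radziwi{\l}{\l} \cite{lr} but adapted here to the real-variable setting.

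Granting this reduction, I would apply H\"older's inequality with conjugate exponents $p = 89/13$ and $q = 89/76$, chosen so that $2p = 178/13$ matches the highest zeta-moment proved by Ivi\'c \cite{Ivic}:
$$ \int Z_N(t)\,|P(t)|^2\,\Phi(t/T)\,dt \leq \left(\int |Z_N(t)|^{89/13}\Phi(t/T)\,dt\right)^{13/89}\left(\int |P(t)|^{89/38}\Phi(t/T)\,dt\right)^{76/89}. $$
The zeta-moment factor is then bounded via Ivi\'c's estimate $\int_0^T |\zeta(1/2+it)|^{178/13}\,dt \ll T^{2+\ve}$, producing a contribution of $T^{26/89+\ve}$. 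The $P$-moment factor is handled by the trivial interpolation $\int |P|^{89/38}\Phi \leq \|P\|_\infty^{13/38}\int |P|^2\Phi$, together with $\int |P|^2\Phi \ll T E_N^*$ from Lemma \ref{lemma_new_1} and the crude estimate $\|P\|_\infty \leq \sum_h a_h \leq \sum_k b_k \leq M \leq N^2$ (the first inequality because $a_h \leq \sum_{k \in I_h} b_k$). Substituting $T = 2^u N^{1+\ve/2}$ and $2^u \leq rN$ then yields the second term $N^{173/89 + 2\ve}(E_N^*)^{76/89}$.

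The ``main'' term $N^{4-\ve/4}$ I expect to obtain from a complementary, essentially trivial, estimate that dominates precisely when $E_N^*$ is close to its maximal allowed value. One natural route is to bound $|Z_N(t)|$ pointwise by $\sum_{n \leq N^{2+\ve/2}} d(n)/\sqrt{n} \ll N^{1+\ve/4}\log N$ and then combine this with a direct counting argument for solutions of $|j_1 z_m - j_2 z_n| < 1$ exploiting the Case 1 hypothesis $z_m \geq N^{1.01}$, which forces $|j_2 z_n/j_1 - z_m| < N^{-1.01}$; the spacing hypothesis $x_{n+1} - x_n \geq c$ then pins down the possibilities for $z_n$ sharply and yields the required $\ve$-power saving.

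The main obstacle is the rigorous realisation of the convolution formula for $Z_N$: the formal identity above must be justified through an approximate functional equation for $\zeta$, with careful bookkeeping so that the resulting error terms do not spoil the application of Ivi\'c's moment bound. A secondary technical point is the compatibility of the Gaussian weight $\Phi(t/T)$ with Ivi\'c's unweighted moment, which is handled by exploiting the rapid decay of $\Phi$ outside $|t| \lesssim T \sqrt{\log T}$ and absorbing the resulting logarithmic losses into the $N^\ve$ factors.
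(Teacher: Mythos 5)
Your overall strategy coincides with the paper's: interpret the inner sum $G(t)$ as a smoothed $|\zeta(1/2+it)|^2$ via a convolution formula, apply H\"older against Ivi\'c's $178/13$-th moment, and control the $P$-factors through $\int|P|^2\Phi(t/T)\,dt \ll TE_N^*$ (Lemma \ref{lemma_new_1}) and $\|P\|_\infty = P(0) \ll N^2$. Your two-factor H\"older split (exponents $89/13$ and $89/76$ applied to $G$ and $|P|^2$) is arithmetically equivalent to the paper's three-factor split (exponents $A,A,B$ applied to the two zeta factors and $|P|^2$ inside the $u$-integral), and it does reproduce the term $T^{105/89+\ve}N^{52/89}(E_N^*)^{76/89}$, hence $N^{173/89+2\ve}(E_N^*)^{76/89}$.

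There are, however, genuine gaps. First, the convolution identity you write down is incomplete: the exact formula (Lemma \ref{fourier}, after de la Bret\`eche--Tenenbaum) carries two additional polar terms $2\pi\zeta(1\mp 2it)K(\pm t - i/2)$ coming from the pole of $\zeta$ at $s=1$, and these are not absorbable errors --- together with the contribution of $|t|\le 1$, which is bounded using $|P(t)|^2\le|P(0)|^2\ll N^4$ and the pointwise bound $G(t)\ll N^{1+\ve}$, they are exactly what produces the first term $N^{4-\ve/4}$ of the lemma. Your proposed alternative source for that term, a ``direct counting argument for solutions of $|j_1z_m-j_2z_n|<1$'', does not bound the object the lemma is about: it would bound the left-hand side of Lemma \ref{lemma_new_3}, not the complex integral, so as written the $N^{4-\ve/4}$ term is unaccounted for. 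Second, you mis-state Ivi\'c's theorem: for $A=178/13$ the bound is $\int_0^T|\zeta(1/2+it)|^{A}dt\ll T^{29/13+\ve}$, not $T^{2+\ve}$ (the latter is the Lindel\"of-conditional strength, and is the whole reason the theorem's exponent is $183/76$ rather than $3$); accordingly the zeta factor contributes $T^{29/89+\ve}$, not $T^{26/89+\ve}$, and your stated intermediate exponents are inconsistent with the (correct) final answer you quote. Finally, you would still need to control the tail $|u|\ge T$ of the convolution, which the paper handles with the convexity bound for $\zeta$ and the decay $K(u)\ll u^{-2}$, yielding the admissible term $T^{1/2}E_N^*$.
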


We now prove these four lemmas.

\begin{proof}[Proof of Lemma \ref{lemma_new_1}]
We have
\begin{align}
\int_\R |P(t)|^2 \Phi(t/T)dt & = \int_\R \sum_{h_1,h_2 \geq 0} a_{h_1} a_{h_2} \left(1 + \frac{1}{T} \right)^{(h_1 - h_2) it} \Phi(t/T) dt \nonumber\\
& = T \sum_{h_1,h_2 \geq 0} a_{h_1} a_{h_2} \int_{\mathbb{R}} \exp \left( \left(\log\left(1 + \frac{1}{T} \right) \right) T (h_1-h_2) i y \right) \Phi(y)dy \nonumber\\
& = T \sum_{h_1,h_2 \geq 0} a_{h_1} a_{h_2} \widehat{\Phi} \left( \left( \log\left(1 + \frac{1}{T} \right) \right) T (h_1-h_2) \right) \nonumber\\
& \ll T \sum_{h_1,h_2 \geq 0} a_{h_1} a_{h_2} \widehat{\Phi} \left( \frac{h_1-h_2}{2} \right) \label{orth_a}\\
& \ll T \sum_{h=0}^\infty a_h^2 \label{orth}\\
& \ll T \sum_{k=1}^\infty b_k^2 \ll T \sum_{\substack{m,n=1 \\ \lvert z_m-z_n \rvert <1}}^{M}1  = T E_N^*. \nonumber
\end{align} 
Here we used that $T \log (1+1/T) \geq 1/2$ for sufficiently large $N$ (note that large $N$ implies large $T$), and the Cauchy--Schwarz inequality together with the rapid decay of $\widehat{\Phi}$ to pass from \eqref{orth_a} to \eqref{orth}. We will further comment on the construction of $P(t)$ at the very end of our Case 1 analysis.
\end{proof}

\begin{proof}[Proof of Lemma \ref{lemma_new_2}]
Let $j_1$ and $j_2$ be fixed, and assume without loss of generality that $j_1 \geq j_2$. Let $k \geq N^{1.01}$ be an integer in $I_{h_1}$, and assume that $z_m \in [k,k+1)$. Then the inequality $|j_1 z_m - j_2 z_n| < 1$ is only possible when 
\begin{equation} \label{cons}
\left| \left\lceil\frac{j_1 k}{j_2} \right\rceil - z_n \right| < 4
\end{equation}
(recall that $j_1/j_2 \leq 2$ because $j_1,j_2$ are located in the same dyadic interval). We write $\ell(k) = \lceil j_1 k / j_2 \rceil$. Recall that $j_1 / j_2 \geq 1$ by assumption, so the mapping $k \mapsto \ell(k)$ is injective. Thus we have
\begin{align}
& \sum_{z_m \in I_{h_1}, z_n \in I_{h_2}} \mathbf{1} \big(|j_1 z_m - j_2 z_n| < 1 \big) \nonumber\\
& \ll \sum_{k \in I_{h_1}} ~\sum_{z_m \in [k,k+1)}~ \sum_{\substack{z_n \in I_{h_2},\\ \left|\ell(k)-z_n \right| < 4}} 1 \nonumber\\ 
& \ll \sum_{k \in I_{h_1}} ~\sum_{z_m \in [k,k+1)}~ \sum_{-4 \leq v \leq 3}  ~ \sum_{\substack{z_n \in I_{h_2},\\ z_n \in \left[\ell(k) + v, \ell(k) + v + 1 \right)}} 1 \nonumber\\
& \ll ~ \sum_{-4 \leq v \leq 3} ~\sum_{\substack{k \in I_{h_1} \text{ such that} \\ \ell(k) + v \in I_{h_2}}}~ b_k b_{\ell(k) + v} \nonumber\\
& \ll  \left(\sum_{k \in I_{h_1}} b_k^2  \right)^{1/2} \left(\sum_{\ell \in I_{h_2}} b_\ell^2 \right)^{1/2} \nonumber\\
& \ll a_{h_1} a_{h_2} \label{ah1ah2}
\end{align}
by Cauchy--Schwarz.\\

When $j_1$ and $j_2$ are fixed, there can only be solutions of $|j_1 z_m - j_2 z_n| <1$ with $z_m \in I_{h_1}$ and $z_n \in I_{h_2}$ for particular pairs $(h_1,h_2)$. Assume that $z_m \in I_{h_1}$ and $z_n \in I_{h_2}$ such that $|j_1 z_m - j_2 z_n| < 1$. Recall that $j_1 \geq j_2$ by assumption, so we have $\left| \frac{z_m}{z_n} - \frac{j_2}{j_1} \right| < \frac{1}{j_1 z_n}$ and consequently $\frac{z_m}{z_n} \leq \frac{j_2}{j_1} + \frac{1}{j_1 z_n} \leq 2$. Since $z_m \in I_{h_1}$ and $z_n \in I_{h_2}$, the quotient $z_m / z_n$ is somewhere between $(1+ 1/T)^{h_1-h_2-1}$ and $(1+ 1/T)^{h_1 -h_2+1}$, so that
\begin{equation} \label{quot_1}
\frac{z_m}{z_n} \leq  \underbrace{(1+ 1/T)^{h_1-h_2}}_{\leq 2 (1 + 1/T) \leq 3, \text{ since $z_m/z_n \leq 2$}} (1+ 1/T) \leq (1+ 1/T)^{h_1-h_2} + \frac{3}{T}.
\end{equation}
Similarly
\begin{equation} \label{quot_2}
\frac{z_m}{z_n} \geq (1+ 1/T)^{h_1-h_2} - \frac{3}{T}.
\end{equation} 

Since $j_1 \geq 2^{u-1}$ and $z_n \geq N^{1.01}$ by assumption, we have $\left| \frac{z_m}{z_n} - \frac{j_2}{j_1} \right| \leq \frac{1}{2^{u-1} N^{1.01}} \leq \frac{1}{T}$, where the last inequality follows from our choice of $T$. Overall, together with \eqref{quot_1} and \eqref{quot_2} this shows that the inequality $|j_1 z_m - j_2 z_n| <1$ for $z_m \in I_{h_1}$ and $z_n \in I_{h_2}$ is only possible when 
\[
\left| \left( 1 + \frac{1}{T} \right)^{h_1 - h_2} - \frac{j_2}{j_1} \right| \leq \frac{4}{T}.
\]
Note that, for fixed $j_1,j_2$, this is an inequality which only depends on $h_1,h_2$ and not on $z_m,z_n$ anymore. Thus in combination with \eqref{ah1ah2} we obtain
\begin{equation*}
\sum_{1 \leq m,n \leq M} \mathbf{1} \big(|j_1 z_m - j_2 z_n| < 1 \big) \ll \sum_{\substack{h_1, h_2 \geq 0, \\ \left|\left(1 + \frac{1}{T} \right)^{h_1-h_2} - \frac{j_2}{j_1} \right| \leq \frac{4}{T}}} a_{h_1} a_{h_2}
\end{equation*}
for all fixed $j_1$ and $j_2$. When summing over $j_1$ and $j_2$, we obtain the conclusion of Lemma \ref{lemma_new_2}.
\end{proof}

\begin{proof}[Proof of Lemma \ref{lemma_new_3}]
By the properties of $\widehat{K}$ and $\Phi$ we have
 \begin{align} \label{befint}
 & \sum_{2^{u-1} \leq j_1, j_2 < 2^u} \frac{1}{(j_1 j_2)^{1/2}} \sum_{\substack{h_1 \geq 0, h_2 \geq 0,\\\left| \left( 1 + \frac{1}{T} \right)^{h_1 - h_2} - \frac{j_2}{j_1} \right| \leq \frac{4}{T}}} a_{h_1} a_{h_2} \nonumber \\
& \ll \sum_{j_1,j_2 \geq 1} \frac{\widehat{K}(\log j_1j_2)}{(j_1 j_2)^{1/2}} \sum_{h_1,h_2 \geq 0} a_{h_1} a_{h_2} \widehat{\Phi}\left(T \log\left(\frac{j_1}{j_2}(1 + 1/T)^{h_1 - h_2}\right)\right) \nonumber \\
& \ll \frac{1}{T} \int_{\mathbb{R}} \sum_{j_1,j_2 \geq 1} \frac{\widehat{K}(\log j_1j_2)}{(j_1 j_2)^{1/2}} \bigg ( \frac{j_1}{j_2} \bigg )^{it} |P(t)|^2  \Phi(t/T) dt.
 \end{align} 
Note that we crucially used the fact that in all three lines of the displayed equation above, all terms in the summations are non-negative, because $\widehat{K}, \Phi$ and $\widehat{\Phi}$ are all non-negative. Thus, using that $2^u \ll (j_1 j_2)^{1/2} \ll 2^u$, we have
\[
\sum_{2^{u-1} \leq j_1, j_2 \leq 2^{u}} \sum_{\substack{h_1, h_2 \geq 0, \\ \left|\left(1 + \frac{1}{T} \right)^{h_1-h_2} - \frac{j_2}{j_1} \right| \leq \frac{4}{T}}} a_{h_1} a_{h_2} \ll \frac{2^u}{T} \int_{\mathbb{R}} \sum_{j_1,j_2 \geq 1} \frac{\widehat{K}(\log j_1j_2)}{(j_1 j_2)^{1/2}} \bigg ( \frac{j_1}{j_2} \bigg )^{it} |P(t)|^2  \Phi(t/T) dt,
\]
as claimed.
\end{proof}

In order to prove Lemma \ref{lemma_new_4}, we need the following technical tool, which is Lemma 5.3 of \cite{dlbT}.  \\
\begin{lemma}\label{fourier} Let $\sigma \in (-\infty,1)$ and let $F$ be a holomorphic function in the strip $y=\Im  z\in [\sigma-2,0]$, such that 
\begin{equation}\label{croissance} \sup_{\sigma-2 \leq y \leq 0}\lvert F(x+iy)\rvert \ll \frac{1}{x^2+1}.\end{equation}
Then for all $s=\sigma+it\in\mathbb{C}$, $t\neq0$, we have
\begin{align*}
& \sum_{k,\ell \geqslant 1}\frac{\widehat{F}(\log k \ell)}{k^{s} \ell^{\overline{s}}} \\
& = \int_{\mathbb{R}} \zeta(s+iu) \overline{\zeta(s-iu)} F(u)du +2\pi \zeta(1-2it) F(is-i)+2\pi \zeta(1+2it) F(i \overline{s} -i).
\end{align*}
\end{lemma}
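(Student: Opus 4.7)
The plan is to prove the identity by Fourier/Mellin inversion followed by a contour shift. Using the convention $\widehat F(\xi)=\int_{\mathbb R}F(u)e^{-iu\xi}\,du$, write $\widehat F(\log k\ell)=\int_{\mathbb R}F(u)(k\ell)^{-iu}\,du$. Because $F$ is holomorphic in the strip $[\sigma-2,0]$ and satisfies $|F(x+iy)|\ll(x^2+1)^{-1}$, I can shift this contour down to $\mathrm{Im}\,u=y_{0}$ for any $y_{0}\in[\sigma-2,\sigma-1)$; the vanishing of the side pieces of the resulting rectangle follows from the decay hypothesis. On that line $\mathrm{Re}(s+iu)=\mathrm{Re}(\overline s+iu)=\sigma-y_{0}>1$, so inserting this representation into the left-hand side and applying Fubini, justified by the $(x^{2}+1)^{-1}$ decay combined with absolute convergence of both Dirichlet series, yields
\begin{equation*}
\sum_{k,\ell\geq1}\frac{\widehat F(\log k\ell)}{k^{s}\ell^{\overline s}}=\int_{\mathrm{Im}\,u=y_{0}}\zeta(s+iu)\,\zeta(\overline s+iu)\,F(u)\,du.
\end{equation*}

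Next I shift this contour upward to the real axis $\mathrm{Im}\,u=0$. In the horizontal strip $y_{0}\leq\mathrm{Im}\,u\leq0$ the integrand has exactly two singularities, both simple poles arising from the unique pole of $\zeta$ at argument $1$: a pole of $\zeta(s+iu)$ at $u=i(s-1)=is-i$, and a pole of $\zeta(\overline s+iu)$ at $u=i(\overline s-1)=i\overline s-i$. Both points lie at height $\sigma-1\in(y_{0},0)$ and the condition $t\neq0$ guarantees they are distinct (they have real parts $-t$ and $t$ respectively), so $F$ is holomorphic there and the residues are well defined. A short computation using $\zeta(\sigma'+iu)\sim(i(u-u_{0}))^{-1}$ near its pole gives
\begin{equation*}
\mathrm{Res}_{u=is-i}=\tfrac{1}{i}\zeta(\overline s-(s-1))F(is-i)=\tfrac{1}{i}\zeta(1-2it)F(is-i),
\end{equation*}
and analogously $\tfrac{1}{i}\zeta(1+2it)F(i\overline s-i)$ at $u=i\overline s-i$. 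The classical convexity bound $|\zeta(\sigma'+it')|\ll(1+|t'|)^{A}$ on vertical lines, combined again with the decay of $F$, ensures the vanishing of the horizontal sides of the rectangle needed for the shift.

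Applying the residue theorem with the standard orientation (lower contour oriented left-to-right lies below the poles, so moving it upward past them contributes a $+2\pi i$ times the sum of residues) yields
\begin{equation*}
\int_{\mathrm{Im}\,u=y_{0}}=\int_{\mathbb R}+2\pi i\bigl(\mathrm{Res}_{u=is-i}+\mathrm{Res}_{u=i\overline s-i}\bigr),
\end{equation*}
in which the factors of $i$ cancel. Finally, on the real axis the Schwarz reflection $\overline{\zeta(z)}=\zeta(\overline z)$ gives $\overline{\zeta(s-iu)}=\zeta(\overline{s-iu})=\zeta(\overline s+iu)$ for real $u$, so the real-line integral is exactly $\int_{\mathbb R}\zeta(s+iu)\overline{\zeta(s-iu)}F(u)\,du$, which together with the two residue terms is the claimed formula.

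The only real obstacle is the bookkeeping of the two contour shifts: verifying the exchange of sum and integral (done via absolute convergence on the lower contour) and verifying the vanishing of the vertical pieces in the residue shift (done via the $(x^{2}+1)^{-1}$ decay of $F$ beating the polynomial growth of $\zeta$ on vertical lines). Once these routine justifications are in place, the identity reduces to a direct residue computation.
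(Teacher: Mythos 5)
Your argument is correct and is essentially the proof the paper intends: the paper gives no proof of its own but refers to \cite[Lemma 5.3]{dlbT} and to the detailed contour-shift argument of \cite[Lemma 1]{BSconv}, which proceeds exactly as you do (Fourier inversion moved to a horizontal line where both Dirichlet series converge absolutely, Fubini, then a shift back to the real axis collecting the two simple poles of $\zeta(s+iu)$ and $\zeta(\overline{s}+iu)$ at $u=is-i$ and $u=i\overline{s}-i$, whose residues $\tfrac1i\zeta(1\mp 2it)F(\cdot)$ you compute correctly). The only caveat --- inherited from the statement rather than introduced by you --- is that the vanishing of the short vertical sides (which you once call ``horizontal'') and the absolute convergence of the real-axis integral rely on the polynomial growth of $\zeta$ on the line $\Re = \sigma$ being beaten by the $1/(x^{2}+1)$ decay of $F$; this is clear in the case $\sigma=1/2$ used in the paper, but would require more care for very negative $\sigma$.
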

The proof of this lemma (Lemma \ref{fourier}) is only briefly sketched in \cite{dlbT}. However, a detailed proof of a similar lemma is given in \cite[Lemma 1]{BSconv}; for the proof of our lemma one can exactly follow the argument given there, just using the function $f \colon z \mapsto \zeta(z+it) \zeta(z-it) K(i\sigma - iz)$ instead of the one considered there.\\

\begin{proof}[Proof of Lemma \ref{lemma_new_4}]
For simplicity of writing we define
\[G(t)= \sum_{j_1,j_2\geq 1}   \frac{\widehat{K}(\log j_1j_2)}{(j_1 j_2)^{1/2}} \left( \frac{j_1}{j_2} \right)^{it}.\]

We note that $K$ can be extended analytically and satisfies assumption \eqref{croissance}. Furthermore 
 \begin{equation}\label{Kbound} 
|K(t-i/2)| \ll N^{1+\ve/4}/(t+1)^2, \qquad |K(-t-i/2)|  \ll N^{1 + \ve/4}/(t+1)^2. 
\end{equation}

We first note that we have the pointwise bound
\begin{equation}\label{zerocontribution}
\vert P(t)\vert^2 \leq \vert P(0)\vert^2 =\left( \sum_{h \geq 0} a_h \right)^2 \ll \left( \sum_{n=1}^{\infty} b_n \right)^2 \ll N^4, \qquad t \in \mathbb{R}.
\end{equation}
This allows us to see that 
\begin{equation*}
    \int_{-1}^1 G(t) |P(t)|^2 \Phi(t/T) dt \ll N^5
\end{equation*}
and thus we can restrict our integration domain to $|t|\ge 1$ below. By Lemma \ref{fourier} we have 
\[
\int_{|t|\ge 1} G(t) \vert P(t)\vert^2 \Phi(t/T) dt = \mathrm{Int}_1 + \mathrm{Int}_2 + \mathrm{Int}_3,
\]
where 
\begin{align*}
\mathrm{Int}_1 &=  \int_{|t|\ge 1} \vert P(t)\vert^2 \Phi(t/T) \int_{\mathbb{R}}\zeta(1/2+it+iu) \zeta(1/2-it+iu)K(u) ~du ~dt,\\
\mathrm{Int}_2 & =  2\pi\int_{|t|\ge 1} \zeta(1-2it)K(-t-i/2)\vert P(t)\vert^2 \Phi(t/T)~dt,  \\
\mathrm{Int}_3 & =  2\pi\int_{|t|\ge 1} \zeta(1+2it)K(t-i/2)\vert P(t)\vert^2 \Phi(t/T)~dt.
\end{align*}
 
Using \eqref{Kbound} 
together with the easy estimate $|\zeta(1 + it)| \ll \log t$, we obtain 
\begin{align*} 
\mathrm{Int}_2 & \ll N^{1 + \ve/4} N^4 \int_{t \geq 1} \frac{\log t}{t^2} ~dt  \\ 
& \ll N^{5 + \ve/4}.
\end{align*}
Exactly the same estimate holds for $\mathrm{Int}_3$.\\
  
The classical convexity bound $\vert \zeta(1/2+it)\vert \ll \vert t\vert^{1/4}$ gives $|\zeta(1/2+it+iu) \zeta(1/2-it+iu)\vert \ll (\vert t\vert + \vert u\vert)^{1/2} \ll |t|^{1/2} + |u|^{1/2}$. Hence we can bound the contribution of the domain $\vert u\vert \geq T$ to $\mathrm{Int}_1$ by
\begin{align*}
\int_{\mathbb{R}} |t|^{1/2} \vert P(t)\vert^2 \Phi(t/T) \underbrace{\int_{\vert u\vert \geq T} K(u) ~du}_{\ll T^{-1}} ~dt + \int_{\mathbb{R}} \vert P(t)\vert^2 \Phi(t/T) \underbrace{\int_{\vert u\vert \geq T} \vert u\vert^{1/2}K(u) ~du}_{\ll T^{-1/2}} ~dt \\
\ll  \int_{\mathbb{R}} \left(\frac{\vert t\vert^{1/2}}{T} + \frac{1}{T^{1/2}} \right) \vert P(t)\vert^2 \Phi(t/T)dt  \ll T^{1/2} E_N^*, \nonumber
\end{align*}
where we used $K(u) \ll u^{-2}$ and Lemma \ref{lemma_new_1} together with the quick decay of $\Phi$. 

Let $A=\frac{178}{13}$. Then by Ivi\'c's theorem \cite[Theorem 8.3]{Ivic} we have
\begin{equation} \label{zeta_bound}
\int_0^T |\zeta(1/2 + it)|^A dt \ll T^{2 + \frac{3(A-12)}{22} +\ve} = T^{29/13 + \ve}.
\end{equation}
The contribution to $\mathrm{Int}_1$ of small $u$ is
 \begin{align*}
\int_{\vert u\vert \leq T} K(u) \left(\int_{\mathbb{R}} \vert \zeta(1/2+it+iu)\vert \vert\zeta(1/2-it+iu)\vert \vert P(t)\vert^2 \Phi(t/T)   ~dt\right) ~du.   
\end{align*} 

To estimate the term inside the brackets, we  use H\"older's inequality with parameters $1/A+1/A+1/B=1$, so that $B = \frac A{A-2} = \frac{89}{76}$, and write $\vert P(t)\vert^{2} = \vert P(t)\vert^{2-2/B} \vert P(t)\vert^{2/B}$. By Lemma \ref{lemma_new_1} together with \eqref{zerocontribution} and \eqref{zeta_bound} we deduce 
\begin{align}
& \int_{\mathbb{R}} \vert \zeta(1/2+it+iu)\vert \vert\zeta(1/2-it+iu)\vert \vert P(t)\vert^2 \Phi(t/T) ~dt \label{note_here}\\
& \ll \left( \int_{\mathbb{R}}  \vert \zeta(1/2+it+iu)\vert^{A} \Phi(t/T) ~dt \right)^{1/A}\left( \int_{\mathbb{R}}  \vert \zeta(1/2-it+iu)\vert^{A} \Phi(t/T) ~dt\right)^{1/A} \times \nonumber\\ 
& \times  \vert P(0)\vert^{2(1-1/B)} \left(\int_{\mathbb{R}} \vert P(t)\vert^2 \Phi(t/T)~dt\right)^{1/B}   \nonumber\\
& \ll \left(T^{29/13 + \ve} \right)^{2/A} N^{4(1-1/B)} T^{1/B} (E_N^*)^{1/B} \nonumber\\
&= \left(T^{29/13 + \ve} \right)^{2/A} N^{8/A} T^{(A-2)/A} \left( E_N^* \right)^{(A-2)/A}. \nonumber
\end{align}
Integrating over $u$ we deduce that 
\begin{align*}
\mathrm{Int}_1 &\ll \left(T^{29/13 + \ve} \right)^{2/A} N^{8/A} T^{(A-2)/A} (E_N^*)^{(A-2)/A}  \\
&= T^{105/89 + \ve} N^{52/89} (E_N^*)^{76/89}.
\end{align*}
Using Lemma \ref{lemma_new_2} and \eqref{befint} and inserting our bounds for $\mathrm{Int}_1, \mathrm{Int}_2$ and $\mathrm{Int}_3$  we obtain
\begin{align*}
& \sum_{1 \leq m,n \leq M}\sum_{2^{u-1} \leq j_1, j_2 < 2^u}  \mathbf{1} \big(|j_1 z_m - j_2 z_n| < 1 \big) \\\
& \ll \frac{2^u}{T} \int_{\mathbb{R}} G(t) \vert P(t)\vert^2 \Phi(t/T) dt \\
& \ll \frac{2^u}{T}\left( N^{5 + \ve/4} + T^{105/89 + \ve} N^{52/89} (E_N^*)^{76/89} + T^{1/2} E_N^* \right) \\
& \ll \frac{2^u}{T}\left( N^{5 + \ve/4} + T^{105/89 + \ve} N^{52/89} (E_N^*)^{76/89} \right),
\end{align*} 
where we used that the term $T^{1/2} E_N^*$ is dominated by the other two summands. Substituting $T = 2^u N^{1+\ve/2}$ and using that $2^u \ll N$, we finally get the upper bound
\begin{align}\label{finalcase1}
&\sum_{1 \leq m,n \leq M}\sum_{2^{u-1} \leq j_1, j_2 < 2^u}  \mathbf{1}(|j_1 z_m - j_2 z_n| < 1) \nonumber  \\
&\ll \frac 1{N^{1+\ve/2}} (N^{5+\ve/4} + N^{52/89} N^{2 \times 105/89 + 2\ve} (E_N^*)^{76/89}) \nonumber \\
&= N^{4-\ve/4} + N^{173/89+ 2\ve} (E_N^*)^{76/89}. \nonumber
\end{align}
This establishes Lemma \ref{lemma_new_4}.
\end{proof}

We note here that conditionally under the Lindel\"of hypothesis we could estimate the integral in line \eqref{note_here} much more efficiently, by using a pointwise bound for the zeta function and estimating the remaining integral with Lemma \ref{lemma_new_1}.\\

Combining Lemmas \ref{lemma_new_2}, \ref{lemma_new_3} and \ref{lemma_new_4}, we have shown in our Case 1 analysis that the contribution of pairs $z_m,z_n$ with $z_m,z_n \geq N^{1.01}$ to the counting problem \eqref{to_sum} is 
\begin{equation} \label{to_sum_new}
\sum_{\substack{1 \leq m,n \leq M,\\z_m,z_n  \geq N^{1.01}}} ~\sum_{\substack{2^{u-1} \leq j_1,j_2 < 2^u}} ~\mathbf{1} \big(|j_1 z_m - j_2 z_n| < 1 \big) \ll N^{4-\ve/4} + N^{173/89+ 2\ve} (E_N^*)^{76/89}.
\end{equation}

Before we move on to Case 2, we make some further comments on our argument above. Intuitively, it would seem more natural to work with $Q(t):=\sum_m z_m^{it}$ rather than with the more complicated function $P(t)$. However, from a technical point of view the key problem in the whole argument is to be able to choose an appropriate value of $T$ which balances the contribution to our integrals of those values of $t$ for which $|t|$ is ``large'' (this gets worse when $T$ is larger, since the bound for the zeta function grows polynomially in $T$) against the contribution coming from those $t$ for which $|t|$ is ``small'' (this contribution can only be compensated in the final estimate when $T$ is sufficiently large). When working directly with $Q$, the size of $T$ would need to depend on the size of the $z_m$ in order to be able to control $\int |Q|^2$. The ``orthogonalization'' procedure leading to our definition of $P(t)$ gives us more freedom in our choice of $T$. The whole problem discussed in this paragraph occurs only in the real-number setting, in contrast to the integer setting.\\

$\bullet$ {\bf Case 2:~Counting solutions for $z_m,z_n$ with $\min \{z_m,z_n\} < N^{1.01}$.}\\
 
First we consider the contribution to \eqref{to_sum} of those $z_m$ and $z_n$ for which $\max\{z_m,z_n\} < 4N^{1/4}$. We assumed that $x_{n+1} - x_n \geq c > 0$, so $z_n \geq c$ for all $n$. Furthermore, we deduce that among $z_1, \dots, z_M$ there are at most $\ll N^{5/4}$ many elements which are smaller than $4 N^{1/4}$ (we suppress the dependence of the implied constant on $c$). Note that whenever $j_1$ and $z_m,z_n$ are fixed, there are at most $\ll 1$ many possible choices for $j_2$ such that $|j_1 z_m - j_2 z_n| < 1$, again since $z_n \geq c$. Thus the total contribution of pairs $z_m,z_n$ with $\max\{z_m,z_n\} < 4N^{1/4}$ to our counting problem is at most
\begin{equation*}
\sum_{\substack{1 \leq m,n \leq M \\ \max\{z_m,z_n\} < 4 N^{1/4}}} \sum_{\substack{2^{u-1} \leq j_1,j_2 < 2^u}} \mathbf{1} \big(|j_1 z_m - j_2 z_n| < 1 \big) \ll \left( N^{5/4} \right)^2 2^u \ll  N^{7/2}.
\end{equation*}

Now consider the case when $\max \{z_m,z_n\} \geq 4 N^{1/4}$. Recall that we have localized $j_1,j_2$ into a dyadic interval in the counting problem. This implies a similar localization for $z_m$ and $z_n$, since $j_1/j_2 \in [1/2,2]$ and $|j_1 z_m - j_2 z_n|<1$ are only possible if we have $z_m / z_n \in [1/4,4]$, given the fact that $\max\{ z_m,z_n \} \geq 4 N^{1/4}$. \\

Thus we can restrict ourselves in the counting problem \eqref{to_sum} to the case when $z_m \in [4 N^\beta, 8N^\beta)$ for some $\beta \geq 1/4$, and when consequently $z_n$ needs to be in $[N^\beta, 32 N^\beta)$. Note that there are $\ll \log N$ many intervals of this form necessary to cover the whole relevant range $[N^{1/4}, N^{1.01}]$, and clearly we only need to consider $1/4 \leq \beta \leq 1.01$. We count more solutions if we relax the condition to $z_m,z_n \in [N^\beta, 32 N^\beta)$. Thus, let us consider 
\begin{equation} \label{bound_this}
\sum_{\substack{1 \leq m,n \leq M,\\ z_m,z_n \in [N^\beta, 32 N^\beta)}} \sum_{\substack{2^{u-1} \leq j_1,j_2 < 2^u}} \mathbf{1} \big(|j_1 z_m - j_2 z_n| < 1 \big)
\end{equation}
for some $\beta \in [1/4,1.01]$. We set up everything as in Case 1, but now we define $T = 2^u N^{\beta}$. We define the $b_k$'s as before but restricting ourselves to those $z_m$ contained in $[N^\beta, 32 N^\beta)$. That is, we set
$$
b_k = \sum_{\substack{1 \leq m \leq M,\\z_m \in [N^\beta, 32 N^\beta)}} \mathbf{1}(z_m \in [k,k+1)).
$$
Note that previously we had $\sum_k b_k = M \leq  N^2$, whereas now we have a stronger bound. Applying the Cauchy--Schwarz inequality we obtain
\begin{equation} \label{smaller_at_zero}
\sum_{k} b_k  \ll \sqrt{E_N^*} N^{\beta/2}.
\end{equation}
We define $(a_h)_{h \geq 0}$ and $P(t)$ as in Case 1; see \eqref{ah} and \eqref{P_def}. In the present case the inequality $|j_1 z_m - j_2 z_n|<1$ is only possible when $\left| \frac{z_m}{z_n} - \frac{j_2}{j_1} \right| \leq \frac{1}{2^{u-1} N^\beta}$. By construction, $2^{u-1} N^\beta$ becomes large in comparison with $T$, and we can continue to argue as in Case 1. Note that now, as a consequence of \eqref{smaller_at_zero},  we have $|P(0)|^2 \ll E_N^* N^\beta$ instead of $|P(0)|^2 \ll N^4$ as in Case 1. Proceeding as in Case 1 we obtain 
\[
\mathrm{Int}_2, \mathrm{Int}_3\ll N^{1+\beta+\ve/4}E_N^* + T^{1/2} E_N^*.
\]
As for $\mathrm{Int}_1$, we now obtain, again writing $A = \frac{178}{13}$, 
\begin{align}
\mathrm{Int}_1 &\ll (T^{29/13 + \ve})^{2/A} (E_N^* N^\beta)^{2/A} (E_N^*)^{1-2/A} T^{1-2/A} \nonumber\\
&= E_N^* (T^{29/13 + \ve})^{2/A} (N^\beta)^{2/A} T^{1-2/A}. \nonumber
\end{align}
Substituting $T = 2^u N^{\beta} \ll N^{1+\beta}$, we conclude that \eqref{bound_this} is bounded by
\begin{align}
& \ll \frac{2^u}{T} \left(E_N^* N^{1+\beta+\ve/4} + E_N^*  T^{105/89+\ve} N^{13 \beta /89} \right) \nonumber\\
& \ll E_N^* N^{1+\ve/4}  + E_N^* N^{105/89 + 29 \beta / 89 + (1+\beta)\ve}. \nonumber
\end{align}
Recall that we only need to consider $\beta \leq 1.01$, and that there are at most $\ll \log N$ many different values of $\beta$ to consider for Case 2. Hence it follows that, for every fixed value of $u$, \eqref{bound_this} is bounded by
\begin{equation}\label{finalsec2} \ll  E_N^* N^{1.51+3\ve}.   \end{equation} We could have used a different value of $A$ here (for instance $A = 12$, namely Heath-Brown's bound on the twelfth moment from \cite{HB}) to  arrive at \eqref{finalsec2}. However, we kept the same parameters as in Case $1$ to simplify the writing. \\

Finally, inserting in \eqref{line_1} resp.\ \eqref{to_sum} the bound \eqref{to_sum_new} from Case 1 together with the bound \eqref{finalsec2} yields
\[
\mathrm{Var}(h_N, \mu) \ll \max \left(N^{-\ve/8} + N^{-183/89 + 3\ve} (E_N^*)^{76/89}, E_N^* N^{-2.49 + 4\ve} \right),
\]
which concludes the proof of Lemma \ref{lem:var_bound}.

\section{Proof of Theorem \ref{th1}: conclusion of the proof} \label{sec_end}
The crucial ingredient in the proof of Theorem \ref{th1} is the variance bound from Lemma \ref{lem:var_bound}. We record that we have, for every sufficiently small $\ve > 0$,
\[
\mathrm{Var}(h_N, \mu) \ll \max \left(N^{-\ve/8} + N^{-183/89 + 3\ve} (E_N^*)^{76/89}, E_N^* N^{-2.49 + 4\ve} \right).
\]

Inserting $E_N^* \ll N^{183/76-\delta}$  shows that for every sufficiently small $\delta' > 0$ we have
\begin{equation} \label{var_bound}
\mathrm{Var}(h_N, \mu)  \ll N^{-\delta'}.
\end{equation}
 Everything else now follows from a standard procedure. To be a bit more specific, convergence in \eqref{conv_f} can be established using the estimate for the expectations in Section \ref{sec_av}, and using the variance bound  \eqref{var_bound} together with Chebyshev's inequality and the Borel--Cantelli lemma. From that we get a convergence result for almost all $\alpha \in \mathbb{R}$, for fixed values of $r$ and $s$. One notes that there are only countably many possible values of $r$, and that by continuity/monotonicity it is sufficient to consider countably many values of $s$. Since a countable union of sets of measure zero has measure zero as well, almost all $\alpha \in \mathbb{R}$ have the property that \eqref{conv_f} holds for all $r$ and all $s$, as desired. We refer the reader to \cite{all} or \cite{rt}, where this argument is carried out in full detail. It applies without any modifications to the situation in the present paper.

\section{Proof of Theorem \ref{th2_new}.} \label{sec_new}

We set up the same machinery as in the proof of Theorem \ref{th1}. Controlling the expectations, as in Section \ref{sec_av} above, is unproblematic. The crucial part is again the variance estimate. As in Section \ref{sec_var}, we are led to the counting problem 
\begin{equation*} \label{to_sum_theta}
\sum_{1 \leq m,n \leq M} ~\sum_{\substack{2^{u-1} \leq j_1,j_2 < 2^u}} ~\mathbf{1} \big(|j_1 z_m - j_2 z_n| < 1 \big)
\end{equation*}
where $\{z_1, \dots, z_M\}$ is the multi-set of all the absolute differences $\{|x_m - x_n|:~1 \leq m, n \leq N,~m \neq n\}$. As above, $u$ is a positive integer with $2^{u} \leq 2 r N$, and $M = N^2 - N$.\\

As in the general argument before, we can easily dispose of the contribution of those $z_m,z_n$ for which $\max\{z_m,z_n\} < 4 N^{1/4}$. Thus again we can localize $z_m$ and $z_n$, and restrict ourselves to counting 
\begin{equation} \label{to_sum_theta_j}
\sum_{\substack{1 \leq m,n \leq M,\\z_m,z_n \in [N^\beta, 32 N^\beta)}} ~\sum_{\substack{2^{u-1} \leq j_1,j_2 < 2^u}} ~\mathbf{1} \big(|j_1 z_m - j_2 z_n| < 1 \big)
\end{equation}
for some $1/4 \leq \beta \leq 1.01$, where we understand that the final range for $\beta=1.01$ extends over all of $[N^\beta,\infty)$ rather than only $[N^\beta,32 N^\beta)$. Note that in total at most $\ll \log N$ many different values of $\beta$ need to be considered. Let $\beta \in [1/4,1.01]$ be fixed. Let $u$ in \eqref{to_sum_theta_j} be fixed. For integers $k \geq 0$ we define 
\begin{equation} \label{b_def_ps}
b_k = \sum_{\substack{1 \leq m \leq M,\\z_m \in [N^\beta, 32 N^\beta)}} \mathbf{1} \left(z_m \in \left[\frac{k}{2^u},\frac{k+1}{2^u} \right) \right),
\end{equation}
where again in case $\beta=1.01$ the range $[N^\beta, 32 N^\beta)$ is understood to be replaced by $[N^\beta,\infty)$. Note the difference in comparison with \eqref{b_k_def}. There we collected all $z_m$ in a range of the form $[k,k+1)$, since we could only control the number of solutions of the specific inequality \eqref{dioph_in_1}, which has ``$<1$'' on the right-hand side. In contrast we can now control the number of solutions on a finer scale, and can accordingly set shorter ranges for the grouping of the $z_m$ (where $\gamma = 2^{-u}$).\\

Let $\ve>0$ be a small constant (chosen depending on the size of $\eta$ and $\delta$ in the statement of the theorem). Set $T = 2^u N^{\min\{\beta - \ve, 1 + \ve\}}$. Unlike the argument in the general case in Section \ref{sec_var}, we do not explicitly distinguish between Case 1 and Case 2, but have implicitly included this distinction into the way that $T$ is defined. As in Section \ref{sec_var}, we split the interval $[1, \infty)$ into a disjoint union $\bigcup_{h=0}^\infty I_h$, where
$$
I_h = \left[ \left\lceil \left(1 + \frac{1}{T} \right)^h \right\rceil , \left\lceil \left(1 + \frac{1}{T} \right)^{h+1} \right\rceil \right),
$$
and set 
\begin{equation*}
a_h = \left( \sum_{\substack{k:~k/2^u \in I_h}} b_k^2 \right)^{1/2}, \qquad h \geq 0,
\end{equation*}
as well as
\begin{equation*}
P(t) = \sum_{h=0}^\infty a_h \left(1 + \frac{1}{T} \right)^{iht}.
\end{equation*}
Then by construction we again have
\begin{equation*}\label{orth_new}
\int_\R |P(t)|^2 \Phi(t/T)dt \ll T \sum_{h=0}^\infty a_h^2
\end{equation*}
as during the proof of Lemma \ref{lemma_new_1} in Section \ref{sec_var}, but now we can continue to estimate this by \eqref{engamma_ass} and obtain
\begin{equation} \label{note_that}
T \sum_{h \geq 0} a_h^2 \ll T \sum_{k \geq 0} b_k^2 \ll T E_{N,2^{-u}}^* \ll T \left( N^{2 + \eta} + 2^{-u} N^{3 - \delta} \right).
\end{equation}

Now we establish the necessary upper bound on $|P(0)|$. Trivially we always have $|P(0)| \ll N^2$. For small values of $\beta$ we obtain a better estimate. Note that 
\begin{eqnarray}
|P(0)| & = & \sum_h a_h \leq \sum_k b_k = \# \left\{m:~z_m \in [N^\beta,32 N^\beta) \right\} \nonumber\\
& \leq & \sum_{a=\lfloor N^\beta \rfloor}^{\lfloor 32 N^\beta \rfloor} \# \left\{m:~z_m \in [a,a+1) \right\} \nonumber\\
& \ll & N^{\beta/2} \left( \sum_{a=\lfloor N^\beta \rfloor}^{\lfloor 32 N^\beta \rfloor} \Big(\# \left\{m:~z_m \in [a,a+1) \right\} \Big)^2 \right)^{1/2} \label{CS} \\
& \ll & N^{\beta/2} \sqrt{E_{N,1}^*} \nonumber\\
& \ll & N^{\beta/2} N^{(3-\delta)/2} = N^{3/2 + \beta/2 - \delta/2}, \label{CS2}
\end{eqnarray}
where for \eqref{CS} we used Cauchy--Schwarz and for \eqref{CS2} our assumption \eqref{engamma_ass} in the statement of Theorem \ref{th2_new}. So overall we have $|P(0)| \ll N^{\min\{2,3/2 + \beta/2 - \delta/2\}}$.\\

As in the proof of Lemma \ref{lemma_new_2} in Section \ref{sec_var}, we assume without loss of generality that $j_1 \geq j_2$. Recall that $j_1,j_2 \geq 2^{u-1}$ by assumption. Assume that $z_m \in [k/2^u,(k+1)/2^u)$ for some $k$. Then the inequality $|j_1 z_m - j_2 z_n | \leq 1$ is only possible when 
$$
\left| \frac{\left\lceil \frac{j_1 k}{j_2} \right\rceil}{2^u} - z_n \right| \leq \frac{5}{2^{u}},
$$
which is a version of \eqref{cons} that is adapted to the construction in \eqref{b_def_ps}. Arguing as in the lines leading to \eqref{ah1ah2}, this again gives
\begin{equation*}
\sum_{z_m \in I_{h_1}, z_n \in I_{h_2}} \mathbf{1} \big(|j_1 z_m - j_2 z_n| < 1 \big) \ll a_{h_1} a_{h_2},
\end{equation*}
which perfectly resembles \eqref{ah1ah2} but where now the $b_k$ and $a_h$ are defined in a different way according to \eqref{b_def_ps}. Note that $T$ is chosen in such a way that $j_1 z_m$ and $j_2 z_n$ exceed $T$; indeed, by assumption $j_1,j_2 \geq 2^{u-1}$ and $z_m,z_n \geq N^\beta$, while $T \leq 2^u N^{\beta - \ve}$ by definition. Thus we can continue the argument as in Section \ref{sec_zeta}. It turns out that in this setting in order to bound $\text{Int}_1$ it is sufficient to use $\vert \zeta(1/2+it) \vert \ll \vert t\vert^{1/6}$(which is essentially the Weyl--Hardy--Littlewood bound), rather than the more elaborate argument relying on estimates for moments of the Riemann zeta function. We obtain
\begin{align*}
& \sum_{\substack{1 \leq m,n \leq M,\\z_m,z_n \in [N^\beta, 32 N^\beta)}} ~\sum_{\substack{2^{u-1} \leq j_1,j_2 < 2^u}} ~\mathbf{1} \big(|j_1 z_m - j_2 z_n| < 1 \big) \\
& \ll \frac{2^u}{T} \left( |P(0)|^2 N^{1 + \ve/4} +  T^{1+1/3} E_{N,2^{-u}}^* \right) \\
& \ll \frac{2^u N^{\min\{4,3+\beta-\delta\}} N^{1+\ve/4}}{T} + (2^u)^{4/3} N^{(1 + \ve)/3}  N^{2+\eta} + (2^u)^{4/3} N^{(1 + \ve)/3} 2^{-u} N^{3-\delta} \\
& \ll N^{1 + \ve/4 + \min\{4,3+\beta-\delta\} - \min\{\beta-\ve,1 + \ve\}}+ N^{11/3 + \ve + \eta}  + N^{11/3 + \ve - \delta} \\
& \ll N^{4-\ve/2}
\end{align*}
if $\ve$ was chosen sufficiently small (with respect to $\delta$). Here we used \eqref{engamma_ass} as well as $T \ll 2^u N^{1 + \ve},~2^u \ll N$. Noting that we need to consider at most $\ll \log N$ different values of $\beta$, this gives the necessary variance estimate. The remaining part of the proof of Theorem \ref{th2} can be carried out exactly as in the proof of Theorem \ref{th1}. We remark that any subconvex bound for the Riemann zeta function would be sufficient to derive the same conclusion.

\section{Proof of Theorem \ref{th2}.}  \label{sec_theta}

We assume that $\theta > 1$ is fixed, and consider the sequence $(x_n)$ defined by $x_n = n^\theta,~n \geq 1$. Note that with this definition we have $x_{n+1} - x_n \geq 1$ for all $n \geq 1$, so the assumption $x_{n+1} - x_n \geq c$ of Theorem \ref{th2_new} is satisfied in this case with $c=1$. The following lemma of Robert and Sargos shows that the necessary bound on $E_{N,\gamma}^*$ also is satisfied for this sequence.\footnote{We thank Niclas Technau for pointing out to us that the estimate in Lemma \ref{rs_lemma} is also contained as a special case in a general result in a very recent paper of Huang \cite{huang}. Huang's result gives improved error terms, but for our application this does not play a role. However, the generality of Huang's results could allow further applications of our method in the spirit of our Theorem \ref{th2}.}

\begin{lemma}[{\cite[Theorem 2]{rs}}] \label{rs_lemma}
Let $\theta \neq 0,1$ be a fixed real number. For any $\gamma > 0$ and $B \geq 2$, let $\mathcal{N}(B,\gamma)$ denote the number of 4-tuples $(n_1, n_2, n_3, n_4) \in \{B+1, B+2, \dots, 2B\}^4$ for which
\begin{equation} \label{gamma_equ}
\left| n_1^\theta - n_2^\theta + n_3^\theta - n_4^\theta \right| \leq \gamma.
\end{equation}
Then for every $\varepsilon > 0$, 
$$
\mathcal{N} (B,\gamma) \ll_\ve B^{2 +\ve} + \gamma B^{4 - \theta + \ve}. 
$$
\end{lemma}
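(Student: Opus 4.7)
The statement is a purely combinatorial/Diophantine result, so the plan is independent of the earlier machinery in the paper. The basic strategy is to apply the mean value theorem twice (iterated MVT) to exploit the monotonicity and convexity of $f(x) = x^\theta$, and to organize the count by differences.

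First, I would isolate the degenerate/diagonal contribution: tuples with $\{n_1, n_3\} = \{n_2, n_4\}$ as multisets satisfy the inequality trivially and number $O(B^2)$, absorbed into the $B^{2+\varepsilon}$ term. For the remaining tuples, I would reparametrize by setting $a = n_1 - n_2$ and $b = n_4 - n_3$, with $a, b \in [-(B-1), B-1]$. The inequality then becomes $|g_a(n_2) - g_b(n_3)| \leq \gamma$, where $g_c(x) := (x+c)^\theta - x^\theta$.

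The second step is to exploit the structure of $g_c$. By the mean value theorem, $g_c(x) = \theta c \xi^{\theta-1}$ for some $\xi \in [x, x+c]$, so $g_c(x) \asymp c B^{\theta-1}$ on our range. More importantly, $g_c'(x) = \theta((x+c)^{\theta-1} - x^{\theta-1}) \asymp c B^{\theta-2}$, which means that as $x$ varies over integers in $[B+1, 2B]$, the values of $g_c$ are monotone with consecutive spacing of order $|c| B^{\theta-2}$. Hence, for fixed $a, b, n_3$, the number of admissible $n_2$ is $\ll 1 + \gamma/(|a| B^{\theta-2})$. Summing over $b, n_3$ (giving a factor $\ll B^2$) and over $a \neq 0$ with the divisor-style harmonic sum yields $\ll B^3 + \gamma B^{4-\theta} \log B$, which gives the right $\gamma$-term but only $B^3$ in the first term.

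The main obstacle, and where the hard work lies, is sharpening the diagonal-free term from $B^3$ to $B^{2+\varepsilon}$. The plan is to iterate the mean value theorem: the joint inequality $|g_a(n_2) - g_b(n_3)| \leq \gamma$, combined with the additional information from comparing two solutions, produces a second Diophantine inequality involving $g_a'$ (and hence essentially $f''$). Concretely, I would fix the ``direction'' $a$ and count solutions in $(b, n_2, n_3)$ using that the two-variable function $(n_2, n_3) \mapsto g_a(n_2) - g_b(n_3)$ has a non-vanishing Jacobian-type quantity (since $\theta \neq 0, 1$ and thus $f'' \neq 0$); this gives a bilinear/quadratic-form type estimate permitting to save one power of $B$ at the cost of $B^\varepsilon$. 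An equivalent route is a Cauchy--Schwarz followed by a second application of the spacing bound to the resulting ``energy'' quantity. Either route relies crucially on $\theta \neq 0, 1$ (i.e.\ the strict convexity/concavity of $f$), which is precisely the hypothesis of the lemma.
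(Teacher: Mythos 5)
First, a remark on what you are being compared against: the paper does not prove this lemma at all. It is imported verbatim as Theorem~2 of Robert and Sargos \cite{rs}, whose proof goes through their estimates for three-dimensional exponential sums with monomials (built on the Bombieri--Iwaniec double large sieve); the paper's accompanying footnote also points to Huang \cite{huang} on rational points near hypersurfaces as an alternative source. So your proposal has to stand entirely on its own, and it does not.

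Your first three steps are fine: discarding the diagonal, substituting $a=n_1-n_2$, $b=n_4-n_3$, $g_c(x)=(x+c)^\theta-x^\theta$, and using the monotonicity $|g_a'|\asymp |a|B^{\theta-2}$ to count $n_2$ for fixed $(a,b,n_3)$ correctly yields $\mathcal{N}(B,\gamma)\ll B^{3}+\gamma B^{4-\theta}\log B$. But the step you defer --- improving $B^{3}$ to $B^{2+\varepsilon}$ --- is not a refinement to be supplied later; it is the entire content of the theorem, and the mechanism you sketch cannot deliver it. After your reduction there are $\asymp B^{2}$ relevant pairs $(a,b)$, and for each one the solutions $(n_2,n_3)$ lie within distance $\asymp\gamma/(|a|B^{\theta-2})$ of the curve $g_a(n_2)=g_b(n_3)$, which has length $\asymp B$. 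The target bound therefore asserts that \emph{on average over $(a,b)$} this curve passes near only $B^{\varepsilon}$ integer points. No pointwise curvature or ``non-vanishing Jacobian'' argument (iterated MVT, second-derivative tests, Bombieri--Pila/Huxley-type counts of integer points close to a curve) gives that: such per-curve bounds are a genuine positive power of $B$ (of the shape $B^{1/2}$ or $B^{2/3}$ plus an area term), and summing them over $B^{2}$ pairs lands around $B^{5/2}$ or worse, nowhere near $B^{2+\varepsilon}$. The required saving comes from exploiting the average over $(a,b)$, which is exactly what the exponential-sum/double-large-sieve machinery of \cite{rs} (or Huang's method) accomplishes and what your outline omits. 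Your alternative route via Cauchy--Schwarz fares no better: squaring the count doubles the number of free variables and produces a harder $8$-tuple problem, while the double-large-sieve form applied to $\mathcal{Y}=\mathcal{Z}=\{m^\theta+n^\theta\}$ returns $\mathcal{N}(B,\gamma)$ on both sides and is circular. It is telling that the one case admitting an elementary proof --- $\theta$ a positive integer and $\gamma<1$, where the inequality degenerates to an equation and divisor-type arithmetic applies --- uses structure that simply does not exist for general real $\theta$. The decisive step of your proposal is therefore a genuine gap.
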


The restriction to a dyadic range for $(n_1, n_2, n_3, n_4)$ in the statement of the lemma does not actually play a role. This is easily seen by interpreting the number of solutions of the inequality as an $L^4$-norm. Indeed, generalizing the definition in \eqref{measure} and setting 
$$
d\mu_{2 \gamma}(x) = \frac{(\sin (\gamma x))^2}{\pi \gamma x^2} dx,
$$
we have a measure whose Fourier transform is a (normalized) tent function on $[-2\gamma,2 \gamma]$. Let $E_{N,\gamma}^*$ denote the number of solutions of \eqref{gamma_equ}, subject to $(n_1, n_2, n_3, n_4) \in \{1, \dots, N\}^4$. Assume for simplicity of writing that $N$ is a power of $2$, i.e.\ $N = 2^L$ for some $L \geq 1$. Then applying H\"{o}lder's inequality we obtain
\begin{align}
E_{N,\gamma}^* & \ll \int_\R \left(\sum_{\ell=0}^{L} ~\sum_{2^{\ell-1} < n \leq 2^{\ell}} ~e^{2 \pi i n^\theta x} \right)^4 d\mu_{2 \gamma}(x) \nonumber\\
& \ll \int_\R \left(\sum_{\ell=0}^{L} 1\right)^3 ~\sum_{\ell=0}^{L} \left\vert \sum_{2^{\ell - 1} < n \leq 2^{\ell}} e^{2 \pi i n^\theta x} \right\vert^4 d\mu_{2 \gamma}(x) \nonumber \\
& \ll (\log N)^3 \sum_{\ell=0}^{L}  \int_\R \left\vert \sum_{2^{\ell - 1} < n \leq 2^{\ell}}e^{2 \pi i n^\theta x} \right\vert^4 d\mu_{2 \gamma}(x) \label{this_line}\\
&\ll (\log N)^3  \sum_{\ell=0}^{L} E_{2^{\ell-1},2\gamma}  \nonumber \\
& \ll_\ve  N^{2 +\ve} + \gamma N^{4 - \theta + \ve} , \nonumber
\end{align}
which is obtained by interpreting the integrals in line \eqref{this_line} in terms of solutions of the Diophantine inequality \eqref{gamma_equ}, and applying Lemma \ref{rs_lemma} with para\-meters $2 \gamma$ and $B = 2^{\ell-1}$. Thus we have
\begin{equation} \label{engamma}
E_{N,\gamma}^* \ll_\ve N^{2 +\ve} + \gamma N^{4 - \theta + \ve}
\end{equation}
for any $\ve>0$. Consequently all assumptions of Theorem \ref{th2_new} are satisfied, and we can conclude that $(n^\theta \alpha)_n$ has Poissonian pair correlation for almost all $\alpha$.

\section{Closing remarks} \label{sec_close}

As remarked in the introduction, our method breaks down completely when the growth order of the sequence $(x_n)_{n \geq 1}$ is only linear or even slower. Not only does the ``lattice point counting with the zeta function'' argument from Section \ref{sec_zeta} fail to work in this situation, but there is a much more fundamental reason why the whole approach based on calculating first and second moments (expectations and variances, as in Sections \ref{sec_av} and \ref{sec_var}) fails to work in this setup. To give a brief sketch of what causes the problem, assume that $(x_n)_{n}$ is a sequence of reals such that $x_n \leq n,~ n \geq 1$. Assume that we want to bound the variance in analogy with \eqref{wish_to}, so say we want to show that
\begin{equation} \label{wish_to_2}
\int_\R \left(\frac{1}{N} \sum_{\substack{1 \leq m, n \leq N, \\ m \neq n}} \mathbf{1}_{[-1/N,1/N]} (x_m \alpha - x_n \alpha)\right)^2 \dmu
\end{equation}
tends to zero as $N \to \infty$ (where we write the original indicator function instead of its approximation by a trigonometric polynomial, and where for simplicity of writing we set $s=1$). By our assumption on the growth of $(x_n)_n$, all differences $x_m -x_n$ appearing in the sum above are uniformly bounded by $N$. Thus we have  $\mathbf{1}_{[-1/N,1/N]} (x_m \alpha - x_n \alpha) = 1$ throughout the range $\alpha \in [-1/N^2,1/N^2]$, for all $m,n \leq N$. Consequently
\begin{align*}
\int_\R \left(\frac{1}{N} \sum_{\substack{1 \leq m, n \leq N, \\ m \neq n}} \mathbf{1}_{[-1/N,1/N]} (x_m \alpha - x_n \alpha)\right)^2 \dmu & \geq \int_{-1/N^2}^{1/N^2} \left(\frac{1}{N} \sum_{\substack{1 \leq m, n \leq N, \\ m \neq n}}  1 \right)^2 \dmu \\
& \gg 1.
\end{align*}
Thus the variance fails to tend to zero for a slowly growing $(x_n)_n$, due to the fact that the contribution of small values of $\alpha$ to the variance integral is too large.\footnote{A similar argument appears at the end of \cite{rsarnak}, where it is used to show that the $L^2$ approach fails to work in the case of the triple correlation of $(n^2 \alpha)_n$; cf.\ also \cite{tw}.} The argument used in Section \ref{sec_zeta} fails to work in a similar way for slowly growing $(x_n)_n$, since the error terms coming from the contribution to the integrals of values of $t$ near zero become too large.\footnote{It might be difficult to spot at a quick glance, so we briefly comment on where the speed of growth of $(x_n)_n$ was used in our argument in Sections \ref{sec_zeta} and \ref{sec_new}. There is a term $|P(0)|^2 N^{1 + \ve/4}$ coming from the contribution of values of $t$ near the origin to the integral. This term is divided by $T$ at the end of the calculation, so we cannot take $T$ too small since we need $N |P(0)|^2 N^{1 + \ve/4} / N^4 T \to 0$. On the other hand, we cannot take $T$ too large, since we need $T \ll 2^u z_n$ to be able to detect the solutions of our Diophantine inequality. To balance everything out, we need to be able to assure that there are not too many small values of $z_n$ (i.e., not too many differences $x_m - x_n$ which are ``small''). In our proof of Theorem \ref{th1} our assumption on the order of the additive energy takes care of this: it is easy to see that an upper bound on $E_N^*$ implies an upper bound on the number of ``small'' differences $x_m - x_n$, which is what we used in Case 2 of Section \ref{sec_zeta}. In the setting of Theorem \ref{th2_new} a similar argument based on the energy assumption allowed us to control the number of small differences $x_m - x_n$; the relevant equations there are \eqref{smaller_at_zero} and \eqref{CS2}.} Consequently, it seems that for establishing Poissonian pair correlation of $(x_n \alpha)_n$ for almost all $\alpha$ for slowly growing $(x_n)_n$ some genuine new ideas are necessary. Note that we cannot simply remove all values of $\alpha$ near zero from the variance integral \eqref{wish_to_2} by replacing $\mu$ with some other measure which vanishes for small $\alpha$, since such a measure would fail to have non-negative Fourier transform (thereby causing major problems in other places). Note also that all these problems with slowly growing sequences $(x_n)_n$ are a novel aspect which only shows up in the real-number setup -- in contrast, when $(a_n)_n$ is an integer sequence which grows at most linearly, then $(a_n \alpha)_n$ is known to fail to have Poissonian pair correlation for any $\alpha$, because the additive energy of $(a_1, \dots, a_N)$ necessarily is of maximal possible order (cf.\ \cite{ls}).\\
 
We emphasize that the fact that our method fails to work in the case of slowly growing sequences $(x_n)_n$ should \emph{not} be understood as indicating that in such a case $(x_n \alpha)_n$ should necessarily fail to have Poissonian pair correlation for almost all $\alpha$. Quite on the contrary, there are good reasons to expect that also for slowly growing $(x_n \alpha)_n$ one should in ``generic'' situations obtain Poissonian pair correlation for almost all $\alpha$. It seems that the property of having Poissonian pair correlation for $(x_n \alpha)_n$ for almost all $\alpha$ can only be prevented by a certain (``small-scale'') combinatorial obstruction, in such a way that the case of integer sequences $(x_n)_n$ with slowly growing $(x_n)_n$ can be seen as a degenerate situation exhibiting exactly this type of combinatorial obstruction (coming from the fact that in the integer setup everything which is smaller than one in absolute value necessarily equals zero). We believe that these are very interesting phenomena, and we propose the following open problems.\\

{\bf Open Problem 1:} Let $\theta \in (0,1)$. Show that $(n^\theta \alpha)_{n \geq 1}$ has Poissonian pair correlation for almost all $\alpha$. Note that Lemma \ref{rs_lemma} is still valid for this range of $\theta$. \\

{\bf Open Problem 2:} Let $x_n = n + \log n$. Show that $(x_n \alpha)_n$ has Poissonian pair correlation for almost all $\alpha$. We note that it is possible to establish a variant of Lemma \ref{rs_lemma} for this setting (with exponent $3$ in place of $4-\theta$).\\

{\bf Open Problem 3:} Let $x_n = n \log n,~n \geq 1$. Show that $(x_n \alpha)_n$ has Poissonian pair correlation for almost all $\alpha$.\\

Clearly the exponent $183/76 - \delta$ in the statement of Theorem \ref{th1} is not optimal, and most likely it can be improved to $3 - \delta$ (which is the case conditionally under the Lindel\"of hypothesis). It seems to us that the method of Bloom and Walker \cite{bw}, which led to a quantitative improvement of the results of \cite{all}, cannot be used here. Their method relied on sum-product estimates, which, roughly speaking, leads to an integrand $|P(t)|^2$ being replaced by $|P(t)|^4$. In the case of integer sequences (when working with the random model of the zeta function) one has perfect orthogonality, so that $\int |P|^4$ can be efficiently bounded. In our setting the situation is quite different -- we have constructed our function $P(t)$ in such a way that the diagonal contribution dominates when calculating $\int |P|^2$, but we do not have orthogonality for $\int |P|^4$ and cannot efficiently bound this integral.\\

{\bf Open Problem 4:} Show that Theorem \ref{th1} remains valid under the weaker assumption $E_N^* \ll N^{3 - \delta}$ for some $\delta>0$. Show that this can be further relaxed to assuming $E_{N,\gamma}^* \ll \gamma N^{4 - \delta}$, for all $\gamma$ in a range from roughly $1/N$ to $1$. It might even be the case that only values of $\gamma$ near a critical size of roughly $1/N$ are relevant. Note that if the condition $E_{N,\gamma}^* \ll \gamma N^{4 - \delta}$ uniformly for $\gamma \in [1/N,1]$ truly is the ``right'' condition, then this would give a unified picture for the real-sequence case as well as for the integer-sequence case. Indeed, in the latter case clearly $E_{N,\gamma}^* = E_N$ for all $\gamma < 1$ and thus the condition would reduce to $E_N \ll N^{3-\delta}$, in accordance with the criterion stated after \eqref{dio}.\\

As noted, in the case of an integer sequence $(x_n)_n$ it is known that $(x_n \alpha)_n$ cannot have Poissonian pair correlation for almost all $\alpha$ when $E_N \gg N^3$. It would be interesting to obtain an analogous result in the case of real sequences.\\

{\bf Open Problem 5:} Show that unlike in the integer case, it is possible for an increasing sequence $(x_n)_{n \geq 1}$ of reals that $E_N^* \gg N^3$ and that $(x_n \alpha)_n$ has Poissonian pair correlation for almost all $\alpha$ (compare Open Problems 1 and 2 above, where $E_N^* \gg N^3$). Establish a criterion (stated for example in terms of $E^*_{N,\gamma}$) which ensures that $(x_n \alpha)_n$ does not have Poissonian pair correlation for almost all $\alpha$. A candidate for such a criterion is that $E_{N,\gamma}^* \gg \gamma N^4$ for some $\gamma = \gamma(N)$ for infinitely many $N$, where maybe one also has to assume that these values of $\gamma$ are of size $\gamma \approx 1/N$.

\section*{Acknowledgements}

CA is supported by the Austrian Science Fund (FWF), projects F-5512, I-3466, I-4945 and Y-901. DE is supported by FWF projects F-5512 and Y-901. MM is supported by FWF project P-33043. We thank Winston Heap, Olivier Robert, Zeev Rudnick, Ilya Shkredov, Igor Shparlinski, Athanasios Sourmelidis and Niclas Technau for discussions and comments.

\bibliographystyle{abbrv}

\end{document}